\newtheorem{lemma}{Lemma}
\newtheorem{proposition}[lemma]{Proposition}
\newcommand*{\arccosh}{\mathop{\mathrm{arccosh}}\displaylimits}
\begin{document}

\title[Energy-based computational method]{An energy-based computational method in the analysis of the transmission of energy in a chain of coupled oscillators}

\author{J. E. Mac\'{\i}as-D\'{\i}az}
\address{Departamento de Matem\'{a}ticas y F\'{\i}sica, Universidad Aut\'{o}noma de Aguascalientes, Aguascalientes, Ags. 20100, Mexico}
\address{Department of Physics, University of New Orleans, New Orleans,  LA 70148}
\email{jemacias@correo.uaa.mx}

\author{A. Puri}
\address{Department of Physics, University of New Orleans, New Orleans,  LA 70148}
\email{apuri@uno.edu}

\subjclass[2010]{(PACS) 45.10.-b; 05.45.-a; 02.30.Hq; 05.45.-a}
\keywords{finite-difference scheme; bifurcation analysis; nonlinear supratransmission}

\date{\today}

\begin{abstract}
In this paper we study the phenomenon of nonlinear supratransmission in a semi-infinite discrete chain of coupled oscillators described by modified sine-Gordon equations with constant external and internal damping, and subject to harmonic external driving at the end. We develop a consistent and conditionally stable finite-difference scheme in order to analyze the effect of damping in the amount of energy injected in the chain of oscillators; numerical bifurcation analyses to determine the dependence of the amplitude at which supratransmission first occurs with respect to the frequency of the driving oscillator are carried out in order to show the consequences of damping on harmonic phonon quenching and the delay of appearance of critical amplitude. 
\end{abstract}

\maketitle

\section{Introduction}

The study of nonlinear continuous media described by modified Klein-Gordon equations subject to initial conditions is a topic of interest in several branches of the physical sciences \cite{Remoissenet,Lomdahl,Makhankov,Barone}. Here the analytical study on features of solutions of Klein-Gordon-like equations as well as the development of new and more powerful computational techniques to approximate them have been the most transited highways in the mathematical research of the area.

The behavior of nonlinear continuous media described by modified Klein-Gordon equations subject to boundary conditions is also an interesting topic of study in mathematical physics. This type of mathematical models have proved to be useful when applied, for instance, to the description of the project of data transmission in optical fibers in nonlinear Kerr regimes \cite {Hasegagwa,Mollen} or in the study of the property of self-induced transparency of a two-level system submitted to a high-energy incident (resonant) laser pulse \cite {McCall,Ablowitz}. More concretely, the behavior of a continuous medium submitted to a continuous wave radiation constitutes a fundamental problem that has not been studied in-depth, yet it has shown to have potential applications as a model in the study of Josephson transmission lines \cite{Ustinov,Ustinov2,Ustinov3}.

Numerical studies on discrete versions of these models \cite{frenchpaper} followed by experimental results \cite{Geniet2} have shown that there exists a bifurcation of wave transmission within a forbidden band gap in certain semi-infinite undamped Klein-Gordon-like chains of coupled oscillators which are periodically forced at the end. This bifurcation is a consequence of the generation of nonlinear modes by the periodic forcing at the end of the chain, and allows energy to flow into the medium via a nonlinear process called \emph {nonlinear supratransmission}, which has been proved numerically not to depend on integrability as long as the model possesses a natural forbidden band gap. 

The process of nonlinear supratransmission has been studied numerically by means of computational algorithms already built in standard mathematical packages. Most particularly, the numerical results obtained in \cite{frenchpaper}, for instance, rely on the use of a Runge-Kutta method of high order, which has the advantage of possessing high accuracy and efficiency in the computations, but lacks the consistency in the numerical estimation of the energy flowing into the medium which is desired in the study of supratransmission. With this shortcoming in mind, we present in this paper an alternate numerical formulation to study the process of supratransmission in a nonlinear system of differential equations, that has the advantage of being consistent in the approximation of the solutions to the problem and in the estimation of the continuous energy. More concretely, in the present work we study the process of nonlinear supratransmission in a semi-infinite nonlinear discrete system of coupled oscillators governed by modified sine-Gordon and Klein-Gordon equations with constant internal and external damping. We develop some numerical results for dissipative nonlinear modified Klein-Gordon-like equations, and validate our conclusions against \cite{frenchpaper}. Our study --- rigorous in numerical nature --- will soon be succeeded by future applications in forthcoming papers.

In Section II of this paper we introduce the mathematical problem under study and derive the expression of the instantaneous rate of change of the energy transmitted to the system through the boundary. Section III is devoted to introducing the finite-difference scheme; here we present the discrete energy analysis associated with the problem under study and establish in detail the numerical properties of our method. Numerical results are presented in the next section, followed by a brief discussion.

\section{Analytical results}

\subsection{Mathematical model}

In this article we study the effects of the nonnegative constant parameters $\beta$ and $\gamma$, and the real constant $m ^2$ on the behavior of solutions to the mixed-value problem with mass $m$,
\begin{equation}
\begin{array}{c}
\begin{array}{rcl}
\displaystyle {\frac {d ^2 u _n} {d t ^2} - \left( c ^2 +\beta \frac {d} {d t} \right) \Delta ^2 _x u _n + \gamma \frac {d u _n} {d t} + m ^2 u _n + V ^\prime (u _n)} & = & 0,
\end{array}\\ \\
		\begin{array}{rl}
        \begin{array}{l}
            {\rm subject\ to:} \\ \\ \\
        \end{array}
        \left\{
        \begin{array}{ll}
            u _n (0) = \phi (n), & n \in \mathbb {Z} ^+, \\
            \displaystyle {\frac {d u _n} {d t} (0) = \varphi (n)}, & n \in \mathbb {Z} ^+, \\
            u _0 (t) = \psi (t), & t \geq 0,
        \end{array}\right.
    \end{array}
\end{array}\label{Eqn:DiscreteMain}
\end{equation}
which describes a system of coupled oscillators with coupling coefficient $c \gg 1$, and where $\beta$ and $\gamma$ clearly play the roles of internal and external damping coefficients, respectively. The number $\Delta ^2 _x u _n$ is used to represent the spatial second-difference $u _{n + 1} - 2 u _n + u _{n - 1}$ for every $n \in \mathbb {Z} ^+$, and the boundary-driving function $\psi$ is assumed continuously differentiable on $(0 , + \infty)$. In our study, we will let $V (u) = 1 - \cos u$ for a chain of coupled {\it sine-Gordon} equations, and $V (u _n) = \frac {1} {2!} u _n ^2 - \frac {1} {4!} u _n ^4 + \frac {1} {6!} u _n ^6$ for zero mass in the case of a {\it Klein-Gordon} chain.

It is worth recalling that the differential equation under study has multiple applications in the continuous limit. For instance, a similar equation appears in the study of Josephson junctions between superconductors when dissipative effects are taken into account \cite {Remoissenet}. A modification of this equation is used in the study of fluxons in Josephson transmission lines \cite {Lomdahl}, and a version in the form of a modified Klein-Gordon equation appears in the statistical mechanics of nonlinear coherent structures such as solitary waves (see \cite {Makhankov} pp. 298--309) when no internal damping is present. Besides, our equation clearly describes the motion of a damped string in a non-Hookean medium.

For purposes of this paper, we will consider a system of coupled oscillators initially at rest at the origin of the system of reference, subject to an external harmonic forcing described by $\psi (t) = A \sin \Omega t$, and pure-imaginary or real mass satisfying $| m | \ll 1$. Analysis of the undamped linearized system of differential equations in (\ref {Eqn:DiscreteMain}) shows that the linear dispersion relation reads $\omega ^2 (k) = m ^2 + 1 + 2 c ^2 (1 - \cos k)$
in any case. The driving frequency will take on values in the forbidden band gap region $\Omega < \sqrt {m ^2 + 1}$, in which case the linear analysis yields the exact solutions $u _n (t) = A \sin (\Omega t) e ^{- \lambda n}$, where
$$
\lambda = \arccosh \left( 1 + \frac {m ^2 + 1 - \Omega ^2} {2 c ^2} \right).
$$

Meanwhile, the massless undamped nonlinear case possesses an exact solution in the continuous limit which happens to work well for high values of the coupling coefficient. It has been shown numerically that, for each frequency $\Omega$ in the forbidden band gap, the massless medium starts to transmit energy by means of nonlinear mode generation for amplitudes greater than the threshold value $A _s = 4 \arctan \left( {\lambda c} / {\Omega} \right)$.

\subsection{Energy analysis}

In this section we derive the expression of the instantaneous rate of change of total energy in system (\ref {Eqn:DiscreteMain}). Here, by a {\it square-summable} sequence we understand a real sequence $( a _n ) _{n = 0} ^\infty$ for which $\sum a _n ^2$ is convergent.

\begin{lemma}[Discrete Green's First Identity]
If $(a _n) _{n = 0} ^\infty$ is a square-summable sequence then
$$
\sum _{n = 1} ^\infty (a _{n + 1} - 2 a _n + a _{n - 1}) a _n = a _0 (a _0 - a _1) - \sum _{n = 1} ^\infty (a _n - a _{n - 1}) ^2.
$$
\end{lemma}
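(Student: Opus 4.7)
\medskip

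\noindent\textbf{Proof plan.} The identity is the discrete analogue of integration by parts against the second-difference operator, so the plan is to perform summation by parts on a finite truncation and then pass to the limit, using square-summability to dispose of the boundary term at infinity.

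First I would work with the partial sum $S_N=\sum_{n=1}^{N}(a_{n+1}-2a_n+a_{n-1})a_n$ and use the factorization $a_{n+1}-2a_n+a_{n-1}=(a_{n+1}-a_n)-(a_n-a_{n-1})$ to split it into two telescoping-type sums. Re-indexing the second piece by $m=n-1$ gives
\begin{equation*}
S_N=\sum_{n=1}^{N}(a_{n+1}-a_n)a_n-\sum_{n=0}^{N-1}(a_{n+1}-a_n)a_{n+1},
\end{equation*}
after which I would peel off the $n=N$ term from the first sum and the $n=0$ term from the second and merge the common range $1\le n\le N-1$; the merged summand collapses to $-(a_{n+1}-a_n)^2$. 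This leaves $S_N=(a_{N+1}-a_N)a_N-(a_1-a_0)a_1-\sum_{n=1}^{N-1}(a_{n+1}-a_n)^2$.

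Next I would rewrite the left boundary contribution so that it matches the required form $a_0(a_0-a_1)$: the algebraic identity $-(a_1-a_0)a_1=a_0(a_0-a_1)-(a_1-a_0)^2$ absorbs the missing $n=0$ term into the sum of squared differences, producing
\begin{equation*}
S_N=(a_{N+1}-a_N)a_N+a_0(a_0-a_1)-\sum_{n=0}^{N-1}(a_{n+1}-a_n)^2.
\end{equation*}

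Finally I would pass to the limit $N\to\infty$. Square-summability forces $a_n\to 0$, so the boundary term $(a_{N+1}-a_N)a_N$ vanishes; moreover, the elementary bound $(a_{n+1}-a_n)^2\le 2a_{n+1}^2+2a_n^2$ shows that $\sum_{n\ge 0}(a_{n+1}-a_n)^2$ converges, legitimizing the limit on the right and, through the identity itself, guaranteeing convergence of the series on the left. A shift of index $m=n+1$ then returns the sum of squared differences to the form $\sum_{n=1}^{\infty}(a_n-a_{n-1})^2$ stated in the lemma. The only delicate step is the bookkeeping of the two boundary terms: the right-hand boundary needs square-summability to disappear, while the left-hand boundary has to be rearranged algebraically to yield exactly $a_0(a_0-a_1)$; everything else is mechanical summation by parts.
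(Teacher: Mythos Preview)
Your argument is correct and follows essentially the same route as the paper: both proofs justify convergence via square-summability and then reduce the identity to a telescoping sum plus the squared first differences. The paper packages the telescoping step slightly more compactly by introducing the sequence $t_n=a_{n-1}(a_n-a_{n-1})=a_na_{n-1}-a_{n-1}^2$ and writing $(a_{n+1}-2a_n+a_{n-1})a_n=(t_{n+1}-t_n)-(a_n-a_{n-1})^2$ directly, whereas you arrive at the same cancellation by splitting the second difference, re-indexing, and merging; the content is identical.
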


\begin{proof}
H\"{o}lder's inequality implies that both series in the formula of the lemma converge. Moreover, the sequence defined by $t _n = a _n a _{n - 1} - (a _{n - 1}) ^2$ for every positive integer $n$ converges to zero and the result follows now from the identities
\begin{eqnarray*}
\sum _{n = 1} ^\infty (a _{n + 1} - 2 a _n + a _{n - 1}) a _n & = & \sum _{n = 1} ^\infty (t _{n + 1} - t _n) - \sum _{n = 1} ^\infty (a _n - a _{n - 1}) ^2 \\
 & = & - t _1 - \sum _{n = 1} ^\infty (a _n - a _{n - 1}) ^2.
\end{eqnarray*}
\end{proof}

\begin{proposition}
Let $( u _n (t) ) _{n = 0} ^\infty$ be solutions to {\rm (\ref {Eqn:DiscreteMain})} such that $( \dot {u} _n (t) ) _{n = 0} ^\infty$ is square-summable at any fixed time $t$. The instantaneous rate of change of the total energy in the system is given by
$$
\frac {d E} {d t} = c ^2 (u _0 - u _1) \dot {u} _0 - \gamma \sum _{n = 1} ^\infty (\dot {u} _n) ^2 - \beta \left[ \sum _{n = 1} ^\infty (\dot {u} _n - \dot {u} _{n - 1}) ^2 + (\dot {u} _1 - \dot {u} _0) \dot {u} _0 \right].
$$
\end{proposition}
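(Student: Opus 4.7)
The plan is to begin by positing the natural Hamiltonian form of the total energy,
$$E(t) = \sum_{n=1}^{\infty} \left[ \frac{1}{2}\dot{u}_n^{2} + \frac{c^{2}}{2}(u_n - u_{n-1})^{2} + \frac{m^{2}}{2}u_n^{2} + V(u_n) \right],$$
differentiate term-by-term in $t$, and substitute $\ddot{u}_n$ from the equation of motion (\ref{Eqn:DiscreteMain}). The contributions $m^{2}u_n \dot{u}_n$ and $V'(u_n)\dot{u}_n$ arising from the substitution match exactly (with opposite sign) the time derivatives of $\tfrac{m^{2}}{2}u_n^{2}$ and $V(u_n)$, so those terms cancel identically, leaving
$$\frac{dE}{dt} = -\gamma \sum_{n=1}^{\infty}(\dot{u}_n)^{2} + \beta \sum_{n=1}^{\infty} \dot{u}_n \Delta_x^{2}\dot{u}_n + c^{2}\sum_{n=1}^{\infty}\Bigl[\dot{u}_n \Delta_x^{2} u_n + (u_n - u_{n-1})(\dot{u}_n - \dot{u}_{n-1})\Bigr].$$

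The dissipative sum already matches the target. The internal-damping piece is handled by a direct application of the preceding Discrete Green's First Identity with $a_n = \dot{u}_n$, which is legitimate precisely because of the square-summability hypothesis; this yields
$$\beta \sum_{n=1}^{\infty} \dot{u}_n \Delta_x^{2}\dot{u}_n = \beta \dot{u}_0(\dot{u}_0 - \dot{u}_1) - \beta \sum_{n=1}^{\infty}(\dot{u}_n - \dot{u}_{n-1})^{2} = -\beta\left[\sum_{n=1}^{\infty}(\dot{u}_n - \dot{u}_{n-1})^{2} + (\dot{u}_1 - \dot{u}_0)\dot{u}_0\right],$$
exactly the bracketed expression in the statement.

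The remaining, conservative coupling term is the only one requiring genuine work, because the lemma as stated applies to a single sequence paired with its own second difference, whereas here $\dot{u}_n$ is paired with $\Delta_x^{2} u_n$. The plan is to perform an Abel-type summation by parts on $\sum_{n=1}^{\infty}\dot{u}_n\bigl[(u_{n+1}-u_n)-(u_n-u_{n-1})\bigr]$: shifting indices in the second piece and telescoping produces a boundary term at $n=0$, a residual sum $-\sum_{n=1}^{\infty}(\dot{u}_{n+1}-\dot{u}_n)(u_{n+1}-u_n)$, and a tail contribution at $N\to\infty$. When this is added to $\sum(u_n - u_{n-1})(\dot{u}_n - \dot{u}_{n-1})$ coming from the spring-energy derivative, the interior cancels completely after an index shift, and only the boundary term $c^{2}(u_0 - u_1)\dot{u}_0$ survives.

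The main obstacle, then, is not the algebra but the analytic bookkeeping at infinity: one must justify (i) termwise differentiation of the defining series for $E$, (ii) convergence of each series appearing above, and (iii) the vanishing of the boundary terms $\dot{u}_N(u_{N+1}-u_N)$ and $u_N\dot{u}_N$ as $N\to\infty$ that are generated by the summation by parts. Each of these follows from the square-summability of $(\dot{u}_n)$ combined with H\"older's inequality, in the same spirit as in the proof of the Discrete Green's Identity, and with the implicit regularity of the solution $(u_n(t))$ inherited from the model. Once these tail estimates are in hand, assembling the three computed contributions gives the claimed expression for $dE/dt$.
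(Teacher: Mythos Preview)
Your proof is correct and follows essentially the same route as the paper's: differentiate the Hamiltonian, substitute the equation of motion, apply the discrete Green's First Identity to the $\beta$-term, and reduce the coupling contribution to a single boundary term via a telescoping/summation-by-parts argument. The only cosmetic difference is that the paper packages the last step by introducing the energy flux $J_n = -c^{2}\dot{u}_n(u_n - u_{n-1})$ and writing $dH_n/dt = (J_n - J_{n+1}) + \beta\,\dot{u}_n\,\Delta_x^2\dot{u}_n - \gamma\dot{u}_n^{2}$, so that the $c^2$ boundary term drops out of $\sum(J_n - J_{n+1}) = J_1$ directly; this is exactly your Abel summation recast in flux form.
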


\begin{proof}
The energy density of the undamped system of coupled equations with a potential energy $V (u_n)$ in the $n$-th oscillator is given by $H _n = \frac {1} {2} [\dot {u} _n ^2 + c ^2 (u _{n + 1} - u _n) ^2 + m ^2 u _n ^2] + V (u _n)$. After including the potential energy from the coupling between the first two oscillators, the total energy of the system at any time becomes
$$
E = \sum _{n = 1} ^\infty H _n + \frac {c ^2} {2} (u _1 - u _0) ^2,
$$
and the fact that $u _n$ tends to $0$ as $n$ increases implies that the sequence $J _n = - c ^2 \dot {u} _n (u _n - u _{n - 1})$ converges to zero pointwisely at any fixed time. Simplifying and rearranging terms of the derivative of the Hamiltonian $H _n$ yields the expression
$$
\frac {d H _n} {d t} = (J _n - J _{n + 1}) + \beta (\dot {u} _{n + 1} - 2 \dot {u} _n + \dot {u} _{n - 1}) \dot {u} _n - \gamma (\dot {u} _n) ^2.
$$
The result follows now from this identity after computing the derivative of the total energy of the system, using the formula for telescoping series and applying our discrete version of Green's First Identity.
\end{proof}

Observe from the proposition that the expression of the rate of change of energy associated with damped system (\ref {Eqn:DiscreteMain}) is in agreement with the undamped formula derived in \cite {frenchpaper}. Moreover, it is clear that the external damping coefficient contributes decreasing the total amount of energy in the chain system for $\beta$ equal to zero.

\section{Numerical analysis}

\subsection{Finite-difference scheme}

From a practical point of view, we will restrict our study to a system consisting of a finite number $N$ of coupled oscillators with constant internal and external damping coefficients, described by the system of ordinary differential equations
$$
\displaystyle {\frac {d ^2 u _n} {d t ^2} - \left( c ^2 + \beta \frac {d} {d t} \right) \Delta _x ^2 u _n + \alpha \frac {d u _n} {d t} + m ^2 u _n + V ^\prime (u _n)} = 0
$$
for $1 \leq n \leq N$, where $\alpha$ includes both the effect of external damping and a simulation of an absorbing boundary slowly increasing in magnitude on the last $N - N _0$ oscillators. More concretely, we let $u _{N + 1} (t)$ be equal to zero at all time $t$, and let $\alpha$ be the sum of external damping and the function 
$$
\gamma ^\prime (n) = \left\{ \begin{array}{ll}
\kappa \left[ 1 + \tanh \left( \displaystyle {\frac {2 n - N _0 + N} {2 \sigma}} \right) \right], & N _0 < n \leq N, \\
0, & \mathrm {otherwise}.
\end{array} \right.
$$
In practice, we will let $\kappa = 0.5$, $\sigma = 3$, $N _0 = 150$ and $N = 200$. 

We proceed now to discretize problem (\ref{Eqn:DiscreteMain}) using a finite system of $N$ differential equations and a regular partition $0 = t _0 < t _1 < \dots < t _M = T$ of the time interval $[0 , T]$ with time step equal to $\Delta t$. For each $k = 0 , 1 , \dots , M$, let us represent the approximate solution to our problem on the $n$-th oscillator at time $t _k$ by $u ^k _n$. If we convey that $\delta _t u _n ^k = u _n ^{k + 1} - u _n ^{k - 1}$, that $\delta ^2 _t u _n ^k = u _n ^{k + 1} - 2 u _n ^k + u _n ^{k - 1}$ and that $\delta ^2 _x u _n ^k = u _{n + 1} ^k - 2 u _n ^k + u _{n - 1} ^k$, our problem takes then the discrete form
\begin{equation}
\begin{array}{c}
\begin{array}{rcl}
\displaystyle {\frac {\delta ^2 _t u _n ^k} {(\Delta t) ^2} - \left( c ^2 + \frac {\beta} {2 \Delta t} \delta _t \right) \delta ^2 _x u _n ^k + \frac {\alpha} {2 \Delta t} \delta _t u _n ^k +} \qquad & & \\
\displaystyle {\frac {m ^2} {2} [u _n ^{k + 1} + u _n ^{k - 1}] + \frac {V (u _n ^{k + 1}) - V (u _n ^{k - 1})} {u _n ^{k + 1} - u _n ^{k - 1}}} & = & 0,
\end{array} \\ \\
		\begin{array}{rl}
        \begin{array}{l}
            {\rm subject\ to:} \\ \\ \\
        \end{array}
        \left\{
        \begin{array}{ll}
            u _n ^0 = \phi (n), & 1 \leq n \leq N, \\
            u _n ^1 = \phi (n) + \Delta t \varphi (n), & 1 \leq n \leq N, \\
            u _0 ^k = \psi (k \Delta t), & 1 \leq k \leq M, \\
            u _{N + 1} ^k = 0, & 1 \leq k \leq M.
        \end{array}\right.
    \end{array}
\end{array}\label{Eqn:DiscreteMainDiscr}
\end{equation}

Observe that the proposed numerical method is nonlinear and requires an application of Newton's method for systems of equations in order to be implemented. Notice also that if $V ^\prime (u _n)$ at the $k$-th time step is approximated by $V ^\prime (u _n ^k)$ then the finite-difference scheme becomes linear and an application of Crout's reduction technique for tridiagonal systems suffices to approximate solutions of (\ref {Eqn:DiscreteMain}). In such case, it is readily seen that the vector equation $A \mathbf {u} ^{k + 1} = B \mathbf {u} ^k + C \mathbf {u} ^{k - 1} - V ^\prime (\mathbf {u} ^k) + \mathbf {v} ^k$ must be satisfied for every $k \geq 2$, for the three $N$-dimensional tridiagonal matrices and the $N$-dimensional vector
$$
\begin{array}{rcl}
A = \left(
\begin{array}{cccc}
b & a & \cdots & 0 \\
a & b & \cdots & 0 \\
\vdots & \vdots & \ddots & \vdots \\
0 & 0 & \cdots & b
\end{array}
\right), & \qquad &
B = \left(
\begin{array}{cccc}
d & c ^2 & \cdots & 0 \\
c ^2 & d & \cdots & 0 \\
\vdots & \vdots & \ddots & \vdots \\
0 & 0 & \cdots & d
\end{array}
\right), \\ \\
C = \left(
\begin{array}{ccccc}
e & a & \cdots & 0 \\
a & e & \cdots & 0 \\
\vdots & \vdots & \ddots & \vdots \\
0 & 0 & \cdots & e
\end{array} \right),
& &
\mathbf {v} ^k = \left(
\begin{array}{c}
c ^2 u _0 ^k - a \delta _t u _0 ^k \\
0 \\
0 \\
\vdots \\
0
\end{array}
\right),
\end{array}
$$
respectively, and constants 
$$
\begin{array}{lcr}
\displaystyle {a = - \frac {\beta} {2 \Delta t}}, & \qquad\qquad & \displaystyle {b = \frac {\alpha + 2 \beta} {2 \Delta t} + \frac {m ^2} {2} + \frac {1} {(\Delta t) ^2}}, \\
\displaystyle {d = \frac {2} {(\Delta t) ^2} - 2 c ^2} & \mathrm {and} & \displaystyle {e = \frac {\alpha + 2 \beta} {2 \Delta t} - \frac {m ^2} {2} - \frac {1} {(\Delta t) ^2}}.
\end{array}
$$
Here $\mathbf {u} ^k = (u _1 ^k , \dots , u _n ^k) ^t$ for every $k \in \{ 0 , 1 , \dots , M \}$, and $V (\mathbf {u} ^k)$ is the $n$-th dimensional vector whose $i$-th component is equal to $V (u_i ^k)$. This latter formulation of our problem will be used for validation purposes only, since we will prefer the nonlinear formulation due to the quadratic order of convergence of Newton's method and other reasons that will be presented in the next section. 

\begin{figure*}[tcb]
\centerline{
\begin{tabular}{cc}
\scriptsize{{\bf (a)} $A = 1.77$}&\scriptsize{{\bf (b)} $A = 1.79$} \\
\includegraphics[width=0.45\textwidth]{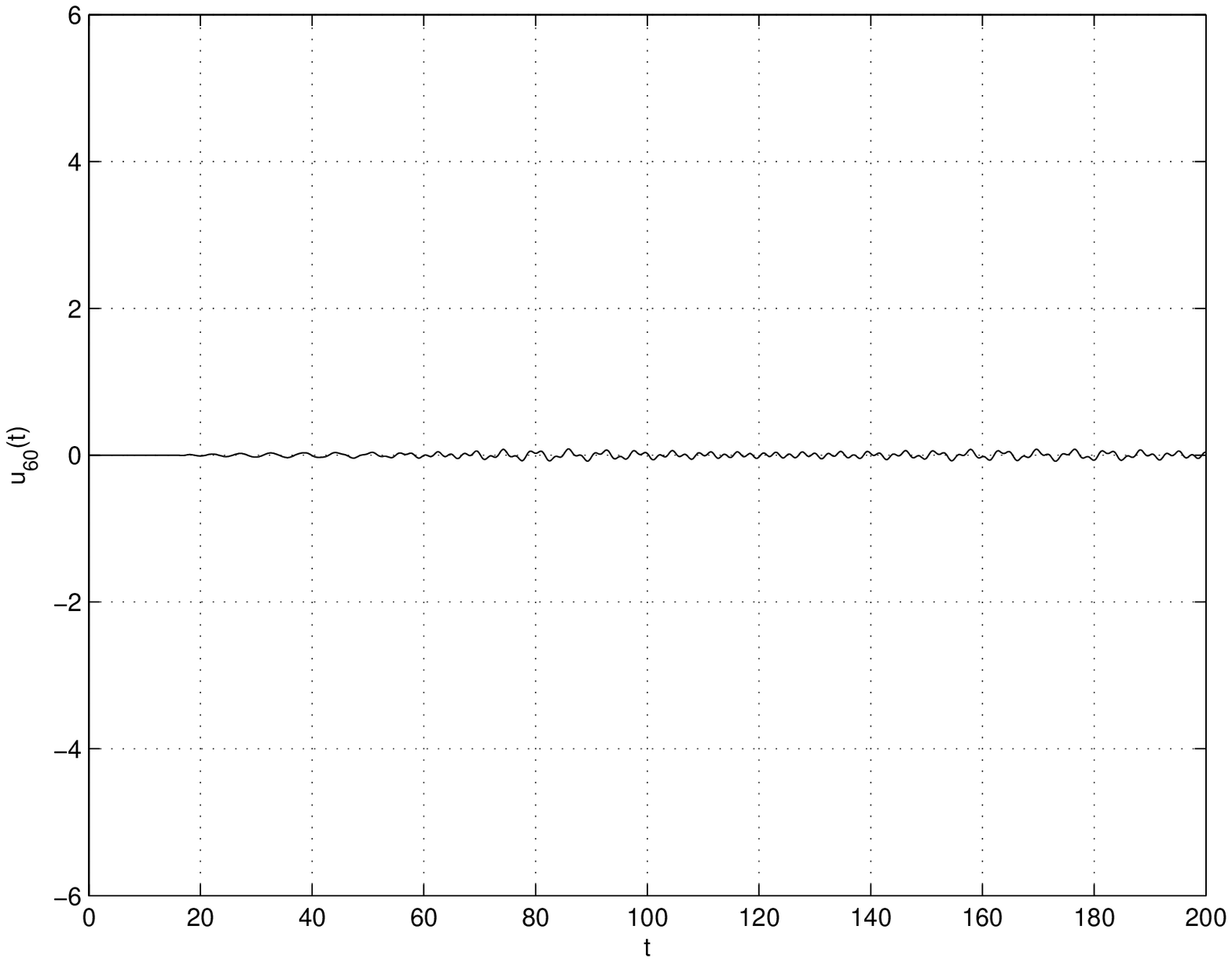}&\includegraphics[width=0.45\textwidth]{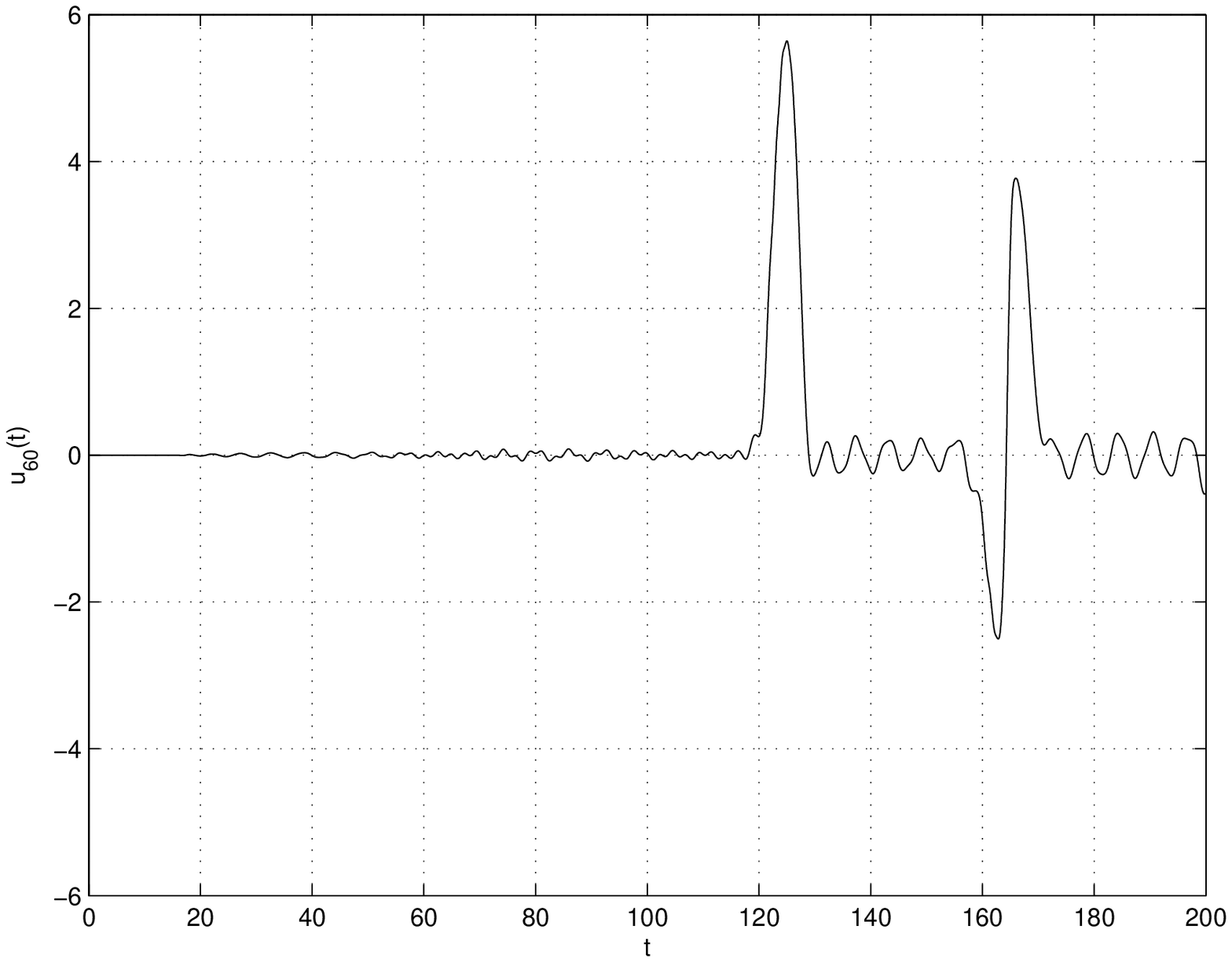} \\
\scriptsize{{\bf (a)} $A = 1.77$}&\scriptsize{{\bf (b)} $A = 1.79$} \\
\includegraphics[width=0.45\textwidth]{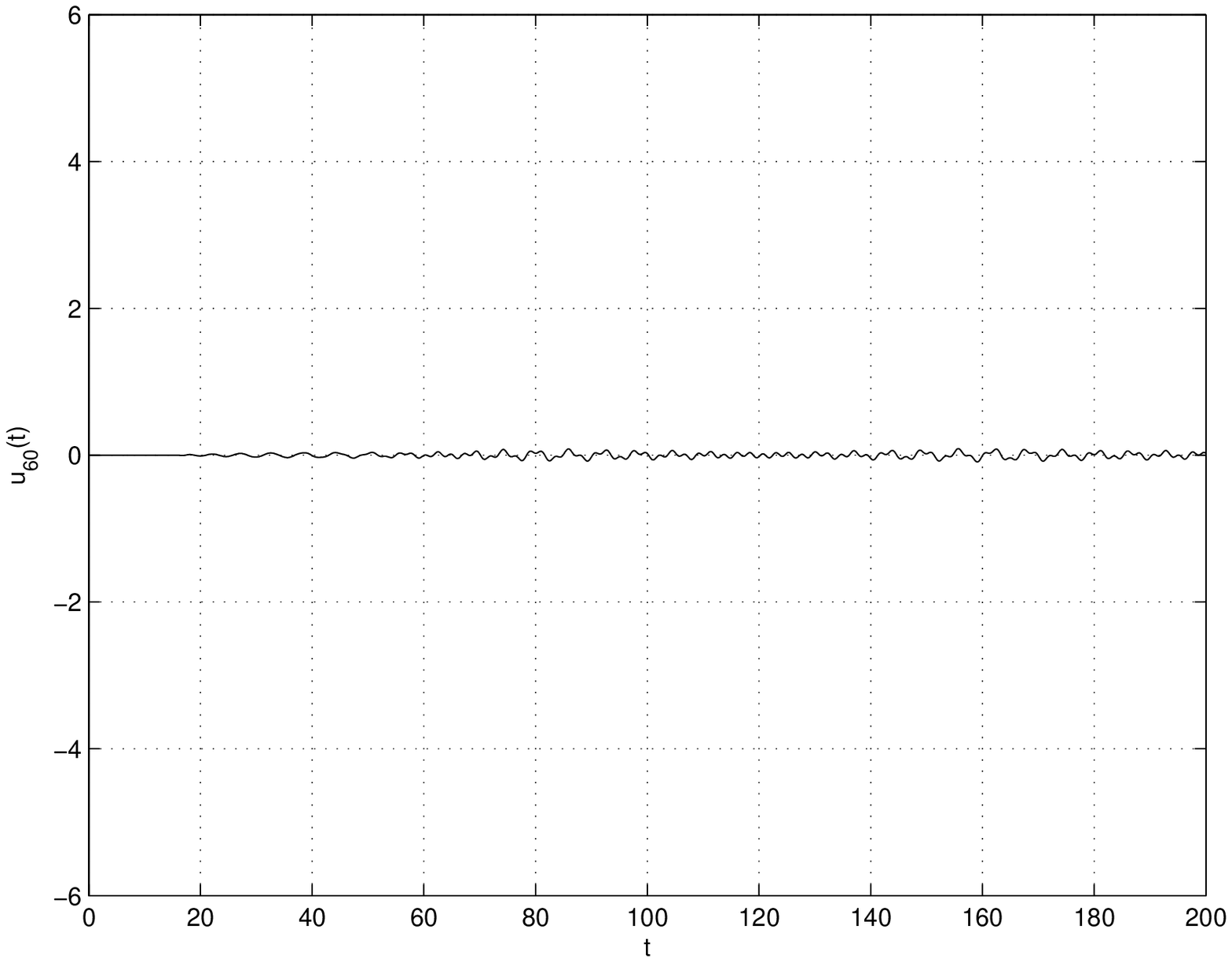}&\includegraphics[width=0.45\textwidth]{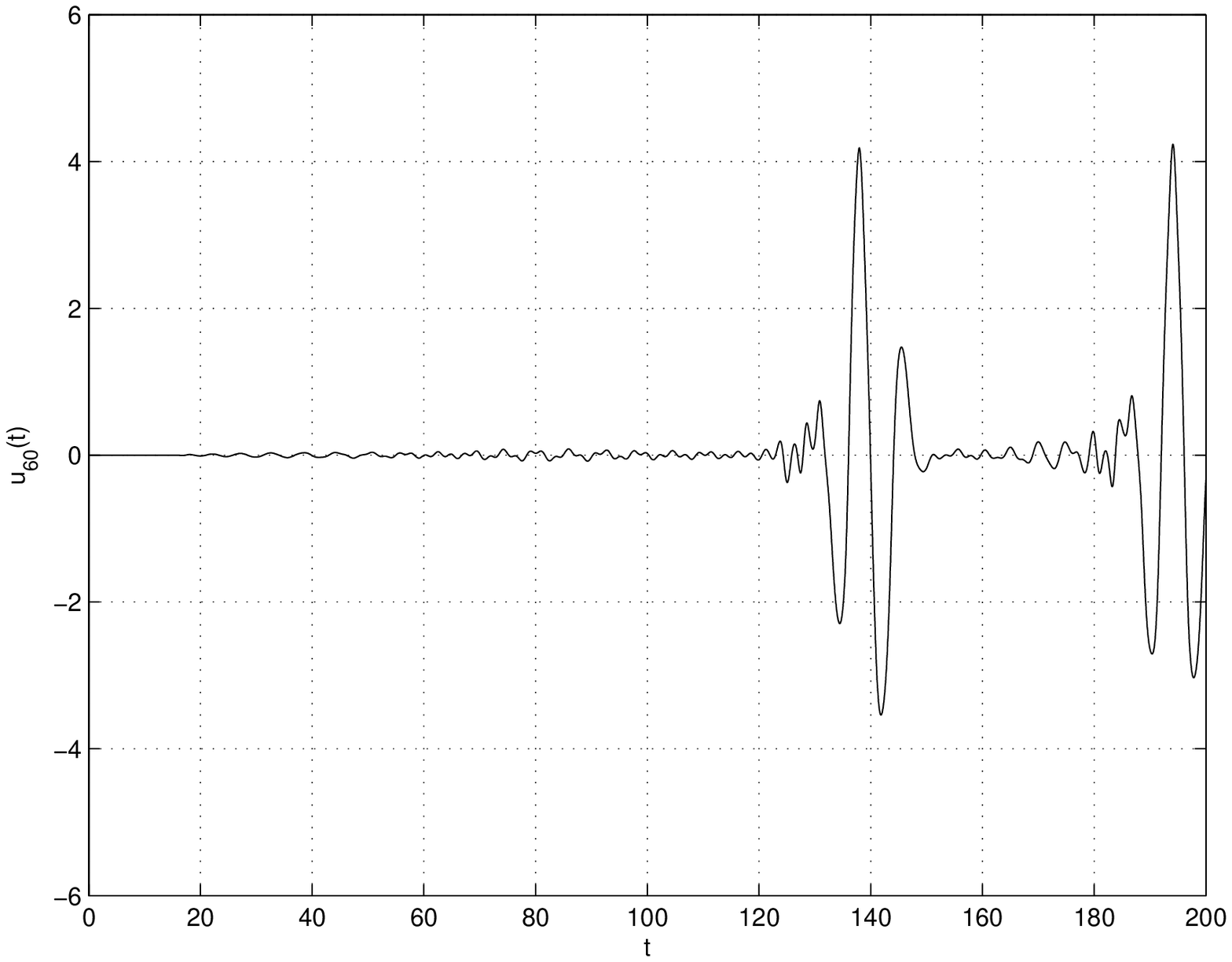}
\end{tabular}}
\caption{Approximate solution $u _{60} (t)$ of a sine-Gordon system in the first row and a Klein-Gordon system in the second, for a driving frequency of $0.9$ and two different amplitude values $A$: (a) before and (b) after the bifurcation threshold.\label{Paper2Fig1}} %
\end{figure*}

\subsection{Discrete energy}

The total energy in the system at the $k$-th time step will be approximated numerically via
\begin{eqnarray*}
E _k & = & \frac {1} {2} \sum _{n = 1} ^ M \left( \frac {u _n ^{k + 1} - u _n ^k} {\Delta t} \right) ^2 + \frac {c ^2} {2} \sum _{n = 1} ^M (u _{n + 1} ^{k + 1} - u _n ^{k + 1}) (u _{n + 1} ^k - u _n ^k) \\
 & & \qquad + \frac {m ^2} {2} \sum _{n = 1} ^M \frac {(u _n ^{k + 1}) ^2 + (u _n ^k) ^2} {2} + \sum _{n = 1} ^M \frac {V (u _n ^{k + 1}) + V (u _n ^k)} {2} \\
 & & \qquad + \frac {c ^2} {2} (u _1 ^{k + 1} - u _0 ^{k + 1}) (u _1 ^k - u _0 ^k).
\end{eqnarray*}

\begin{proposition}
The following identity holds for every sequence $(u _n ^k)$ satisfying scheme {\rm (\ref {Eqn:DiscreteMainDiscr})} and every positive index $k$:
\begin{eqnarray*}
\frac {E _k  - E _{k - 1}} {\Delta t} & = & c ^2 (u _0 ^k - u _1 ^k) \frac {\delta _t u _0 ^k} {2 \Delta t} - \gamma \sum _{n = 1} ^\infty \left( \frac {\delta _t u _n ^k} {2 \Delta t} \right) ^2 \\
 & & \qquad - \beta \left[ \sum _{n = 1} ^\infty \left( \frac {\delta _t u _n ^k - \delta _t u _{n - 1} ^k} {2 \Delta t} \right) ^2 + \left( \frac {\delta _t u _1 ^k - \delta _t u _0 ^k} {2 \Delta t} \right) \frac {\delta _t u _0 ^k} {2 \Delta t} \right].
\end{eqnarray*}
\end{proposition}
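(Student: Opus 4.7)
The plan is to mirror the strategy of the previous (continuous) proposition by introducing a site-level discrete Hamiltonian density $H_n^k$ so that
$$E_k = \sum_{n=1}^{\infty} H_n^k + \frac{c^2}{2}(u_1^{k+1}-u_0^{k+1})(u_1^k - u_0^k),$$
and then establishing a discrete local conservation law of the form
$$\frac{H_n^k - H_n^{k-1}}{\Delta t} = (J_n^k - J_{n+1}^k) + \beta\, v_n^k\, \delta_x^2 v_n^k - \gamma (v_n^k)^2,$$
where $v_n^k := \delta_t u_n^k/(2\Delta t)$ plays the role of the discrete time-derivative and the discrete flux is $J_n^k := -c^2(u_n^k - u_{n-1}^k)v_n^k$. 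Once this site identity is in hand, summing over $n\geq 1$, telescoping the flux, applying the Discrete Green's First Identity (Lemma~1) to the sequence $(v_n^k)_{n\geq 0}$, and adding the time-difference quotient of the boundary coupling term should recover the stated formula exactly.

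To establish the site identity I would compute $(H_n^k - H_n^{k-1})/\Delta t$ one summand at a time. The kinetic contribution factors via the difference-of-squares identity
$$\left(\frac{u_n^{k+1}-u_n^k}{\Delta t}\right)^2 - \left(\frac{u_n^k - u_n^{k-1}}{\Delta t}\right)^2 = \frac{\delta_t u_n^k \cdot \delta_t^2 u_n^k}{(\Delta t)^2},$$
so that its $\Delta t$-quotient becomes $v_n^k \cdot \delta_t^2 u_n^k/(\Delta t)^2$. The mass and potential summands of $E_k$ have been averaged over consecutive time levels precisely so that their time-difference quotients factor as $\tfrac{m^2}{2}(u_n^{k+1}+u_n^{k-1})\,v_n^k$ and $\tfrac{V(u_n^{k+1})-V(u_n^{k-1})}{u_n^{k+1}-u_n^{k-1}}\,v_n^k$, matching term-by-term the mass and nonlinear pieces appearing in scheme (\ref{Eqn:DiscreteMainDiscr}). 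Substituting from the scheme therefore collapses these three contributions into $v_n^k\bigl[c^2\delta_x^2 u_n^k + \beta\,\delta_x^2 v_n^k\bigr] - \gamma(v_n^k)^2$, after using the commutation $\delta_t\delta_x^2 u_n^k = 2\Delta t\,\delta_x^2 v_n^k$ in the internal-damping term.

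For the coupling summand, the time-staggered form $\tfrac{c^2}{2}(u_{n+1}^{k+1}-u_n^{k+1})(u_{n+1}^k - u_n^k)$ has been engineered so that its time-difference quotient factors cleanly as $c^2(u_{n+1}^k - u_n^k)(v_{n+1}^k - v_n^k)$. Adding this to the kinetic/mass/potential piece $c^2 v_n^k \delta_x^2 u_n^k$ and expanding $\delta_x^2 u_n^k = (u_{n+1}^k - u_n^k) - (u_n^k - u_{n-1}^k)$ yields, after cancellation, precisely $J_n^k - J_{n+1}^k$, completing the site identity.

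Summing from $n=1$ to $\infty$: the telescoped flux contributes $J_1^k = -c^2(u_1^k - u_0^k)v_1^k$; Lemma~1 applied to $(v_n^k)_{n\geq 0}$ turns $\sum v_n^k\delta_x^2 v_n^k$ into $v_0^k(v_0^k - v_1^k) - \sum(v_n^k - v_{n-1}^k)^2$; and the time-difference quotient of the boundary coupling term in $E_k$ contributes $c^2(u_1^k - u_0^k)(v_1^k - v_0^k)$. The flux and this boundary contribution combine into $c^2(u_0^k - u_1^k)v_0^k$, matching the stated boundary term exactly, while the $\beta$ pieces collapse into the bracketed expression of the proposition's conclusion. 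The main obstacle is purely organizational: ensuring that every time-averaging choice in $E_k$ interlocks with every time-centering choice in (\ref{Eqn:DiscreteMainDiscr}) so that the discrete computation mirrors the continuous argument line by line, and verifying that square-summability of $(v_n^k)$ — the natural discrete analogue of the hypothesis used in the previous proposition — legitimizes both the telescoping step and the application of Lemma~1.
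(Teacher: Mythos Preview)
Your proposal is correct and follows essentially the same approach as the paper. The paper's own proof is a one-line sketch that says the computation is ``merely an algebraic task'' and that the crucial step is applying the Discrete Green's First Identity to the sequence $a_n=\delta_t u_n^k/(2\Delta t)$ (the paper writes $(u_n^{k+1}-2u_n^{k-1})/2\Delta t$, an evident misprint for $(u_n^{k+1}-u_n^{k-1})/2\Delta t$); you have simply filled in the algebra in full, organizing it exactly along the lines of the continuous-case proposition via a site-level discrete Hamiltonian, a telescoping flux $J_n^k$, and Lemma~1 applied to your $v_n^k$.
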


\begin{proof}
The proof of this result is merely an algebraic task. We need only say that an application of the discrete Green's First Identity with $a _n = (u _n ^{k + 1} - 2 u _n ^{k - 1}) / 2 \Delta t$ is indispensable in order to reach the correct expression of the term with coefficient $\beta$.
\end{proof}

We conclude that the discrete energy associated with scheme (\ref {Eqn:DiscreteMainDiscr}) is a consistent approximation of order $\mathcal {O} (\Delta t)$ of the total energy of system (\ref {Eqn:DiscreteMain}), whereas the discrete rate of change of energy is consistent order $\mathcal {O} (\Delta t) ^2$ with the instantaneous rate of change.

\subsection{Stability analysis}

\begin{figure*}[tbc]
\centerline{
\begin{tabular}{cc}
\scriptsize{Sine-Gordon chain}&
\scriptsize{Klein-Gordon chain} \\
\includegraphics[width=0.45\textwidth]{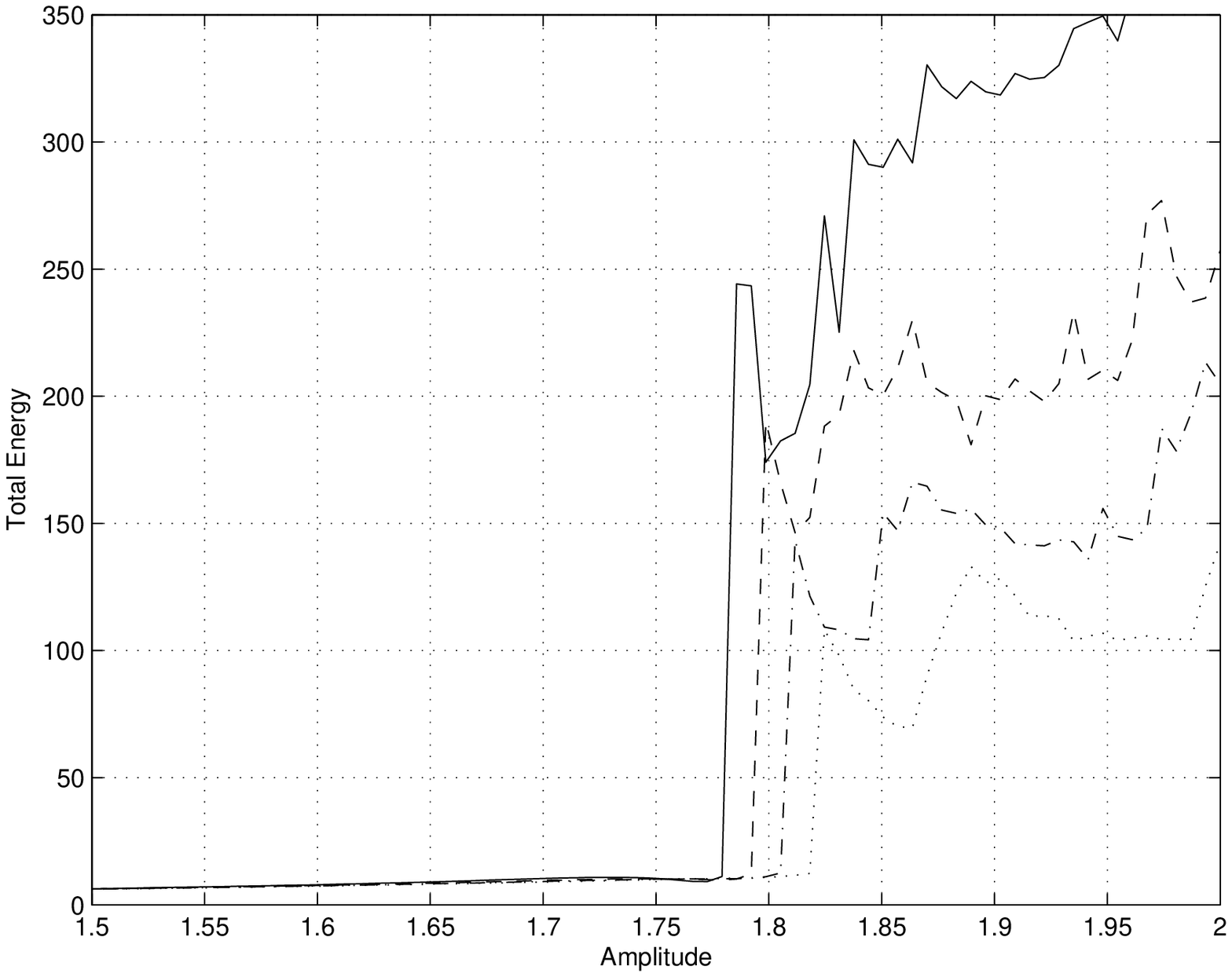}
&\includegraphics[width=0.45\textwidth]{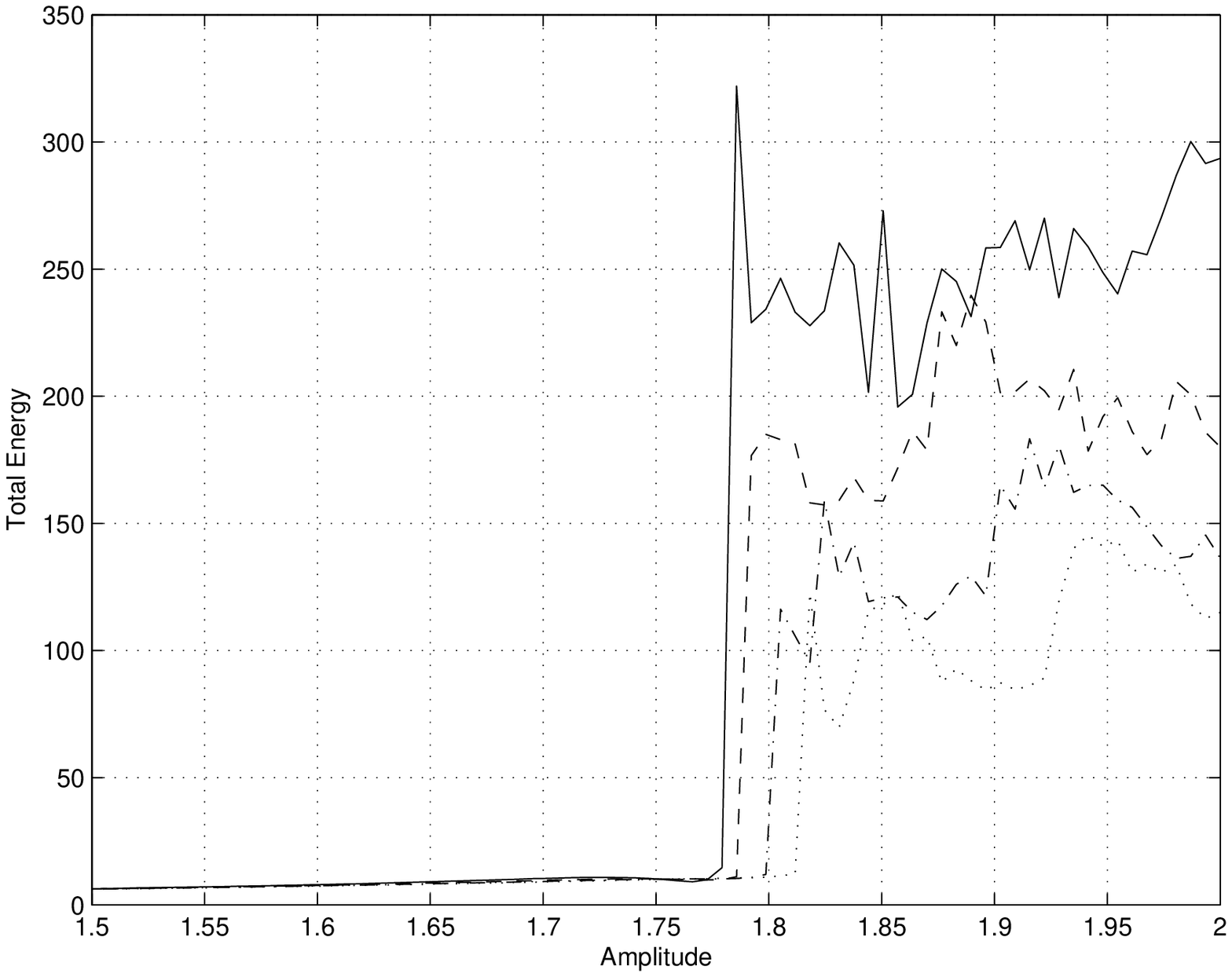} \\
\end{tabular}}
\caption{Graphs of total energy transmitted into the massless sine-Gordon and Klein-Gordon systems vs. driving amplitude for a driving frequency of $0.9$, with $\beta = 0$ and $\gamma = 0$ (solid), $0.01$ (dashed), $0.02$ (dash-dotted), $0.03$ (dotted). \label{Paper2Fig2}} %
\end{figure*}

The following result summarizes the main numerical properties of our method.

\begin{proposition}
Scheme {\rm (\ref{Eqn:DiscreteMainDiscr})} is consistent order $\mathcal {O} (\Delta t) ^2$ with the linear contribution of {\rm (\ref{Eqn:DiscreteMain})} for a potential equal to zero. Moreover, a necessary condition for the scheme to be stable order $n$ is that
$$
\left( c ^2 - \frac {m ^2} {4} \right) \left(\Delta t \right) ^2 < 1 + \left( \frac {\alpha} {4} + \beta \right) \Delta t. 
$$
\end{proposition}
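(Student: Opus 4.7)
The plan is to prove the two claims separately by essentially mechanical computations.

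For consistency, I would substitute a sufficiently smooth exact solution of the linearized problem (\ref{Eqn:DiscreteMain}) with $V\equiv 0$ into the finite-difference operator in (\ref{Eqn:DiscreteMainDiscr}) at the node $(t_k,n)$, and Taylor-expand around that point. Each centered divided difference $\delta_t^2 u_n^k/(\Delta t)^2$ and $\delta_t u_n^k/(2\Delta t)$ is a classical second-order approximation of $\ddot u_n(t_k)$ and $\dot u_n(t_k)$; the mixed term $(\beta/(2\Delta t))\,\delta_t\delta_x^2 u_n^k$ inherits $\mathcal{O}((\Delta t)^2)$ accuracy because $\delta_x^2$ is exact in the discrete spatial variable; and the symmetric average $\tfrac{m^2}{2}(u_n^{k+1}+u_n^{k-1})$ reproduces $m^2 u_n(t_k)$ up to $\mathcal{O}((\Delta t)^2)$. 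Collecting local truncation errors yields the asserted order.

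For the stability statement, I would perform a von Neumann analysis. With $V\equiv 0$ and the boundary frozen, I would insert the harmonic mode $u_n^k=\xi^k e^{i\theta n}$ into the scheme. Using $\delta_x^2 e^{i\theta n}=-\rho\, e^{i\theta n}$ with $\rho:=4\sin^2(\theta/2)\in[0,4]$, the scheme collapses to a real quadratic $A(\rho)\xi^2+B(\rho)\xi+C(\rho)=0$, whose outer coefficients $A,C$ carry the damping $\alpha,\beta$ while $B$ carries the coupling $c$ and the mass $m$. After rescaling by $(\Delta t)^2$ so that all quantities are dimensionless in $\Delta t$, I would impose the Schur--Cohn necessary conditions for both roots to lie in the closed unit disk: positivity of $P(1)$ and $P(-1)$ together with the product-of-roots bound $|C/A|\leq 1$. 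The worst mode is $\rho=4$ (i.e.\ $\theta=\pi$), and tracking that case produces the stated inequality.

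I expect the main obstacle to be the algebraic bookkeeping that places the damping terms precisely where they appear on the right-hand side of the stated bound, and confirming that $\rho=4$ remains the critical mode when $\alpha,\beta>0$. My plan for this is to write the characteristic polynomial in rescaled form, isolate the coefficient structure at $\rho=4$, and then verify monotonicity in $\rho$ of the relevant Schur--Cohn quantity so that no intermediate spatial mode imposes a sharper constraint than the one displayed in the proposition.
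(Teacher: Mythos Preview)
Your overall strategy matches the paper's: for consistency you Taylor-expand the centered differences, and for stability you perform a von~Neumann/Fourier analysis, reduce to a quadratic amplification relation, and examine the highest-frequency mode $\theta=\pi$ (equivalently $\rho=4$). The paper does exactly this, writing the scheme as a two-step first-order system, Fourier transforming to obtain a $2\times2$ amplification matrix, and looking at its eigenvalues at $\xi=\pi$.

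There is, however, a genuine gap in your plan for recovering the \emph{specific} inequality in the proposition. At $\rho=4$ the characteristic polynomial is $A\lambda^{2}+B\lambda+C$ with
\[
A=\hat g(\pi),\qquad B=-2\bigl(1-2c^{2}(\Delta t)^{2}\bigr),\qquad C=\hat h(\pi),
\]
and one checks that $A+C=2+m^{2}(\Delta t)^{2}$ is \emph{independent of the damping parameters}. Consequently the Schur--Cohn condition $P(-1)=A-B+C>0$ that you intend to invoke yields
\[
\Bigl(c^{2}-\tfrac{m^{2}}{4}\Bigr)(\Delta t)^{2}<1,
\]
with no $\alpha$ or $\beta$ on the right-hand side. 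This is a perfectly valid (indeed sharper) necessary condition, but it is not the inequality stated in the proposition. None of the three Schur--Cohn inequalities you list produces the displayed bound with $\bigl(\tfrac{\alpha}{4}+\beta\bigr)\Delta t$ on the right.

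The paper obtains the damping-dependent inequality by a different, more ad~hoc step: it argues directly from the eigenvalue formula that if $1-2c^{2}(\Delta t)^{2}<-\hat g(\pi)$ (equivalently $B>2A$) then the smaller real root satisfies $\lambda_{-}<-1$, forcing $\lVert A(\pi)^{n}\rVert$ to grow faster than any linear function of $n$. Negating this gives precisely $2A>B$, which is the stated bound. So to reproduce the proposition verbatim you should replace the Schur--Cohn step by this direct ``center of the roots'' argument; your route as written proves a correct but different necessary condition.
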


\begin{proof}
Following the notation in \cite{Thomas}, let $U _{1n} ^{k + 1} = u _n ^{k + 1}$ and $U _{2n} ^{k + 1} = u _n ^k$ for each $n = 0 , 1 , \dots , M$ and $k = 0, 1, \dots, N - 1$. For every $n = 0, 1, \dots , M$ and $k = 1, 2, \dots , N$ let $\bar {U} _n ^k$ be the column vector whose components are $U _{1n} ^k$ and $U _{2n} ^k$. Our problem can be written then in matrix form as
$$
\left( \begin{array}{cc} %
g & 0 \\
0 & 1 \end{array} \right)
\bar {U} _n ^{k + 1} = \left( %
\begin{array}{cc}
2 + c ^2 (\Delta t) ^2 \delta _x ^2 & - h \\
1 & 0
\end{array}
\right) \bar {U} _n ^k, \nonumber
$$
where
\begin{eqnarray}
g & = & 1 + \alpha \frac {\Delta t} {2} - \beta \frac {\Delta t} {2} \delta _x ^2 + m ^2 \frac {(\Delta t) ^2} {2} \qquad \qquad {\rm and} \nonumber \\
h & = & 1 - \alpha \frac {\Delta t} {2} + \beta \frac {\Delta t} {2} \delta _x ^2 + m ^2 \frac {(\Delta t) ^2} {2}. \nonumber
\end{eqnarray}
Applying Fourier transform to the vector equation we obtain
$$
\hat {U} _n ^{k + 1} = \left( %
\begin{array}{cc}
\frac {2} {\hat {g} ( \xi )} \left(1 - 2 c ^2 (\Delta t) ^2 \sin ^2 \frac {\xi}
{2} \right) & - \frac {\hat {h} ( \xi )} {\hat {g} ( \xi )} \\
1 & 0
\end{array}
\right) \hat {U} _n ^k. \nonumber
$$

\begin{figure*}[tbc]
\centerline{
\begin{tabular}{cc}
\scriptsize{Pure-imaginary masses} &
\scriptsize{Real masses} \\
\includegraphics[width=0.45\textwidth]{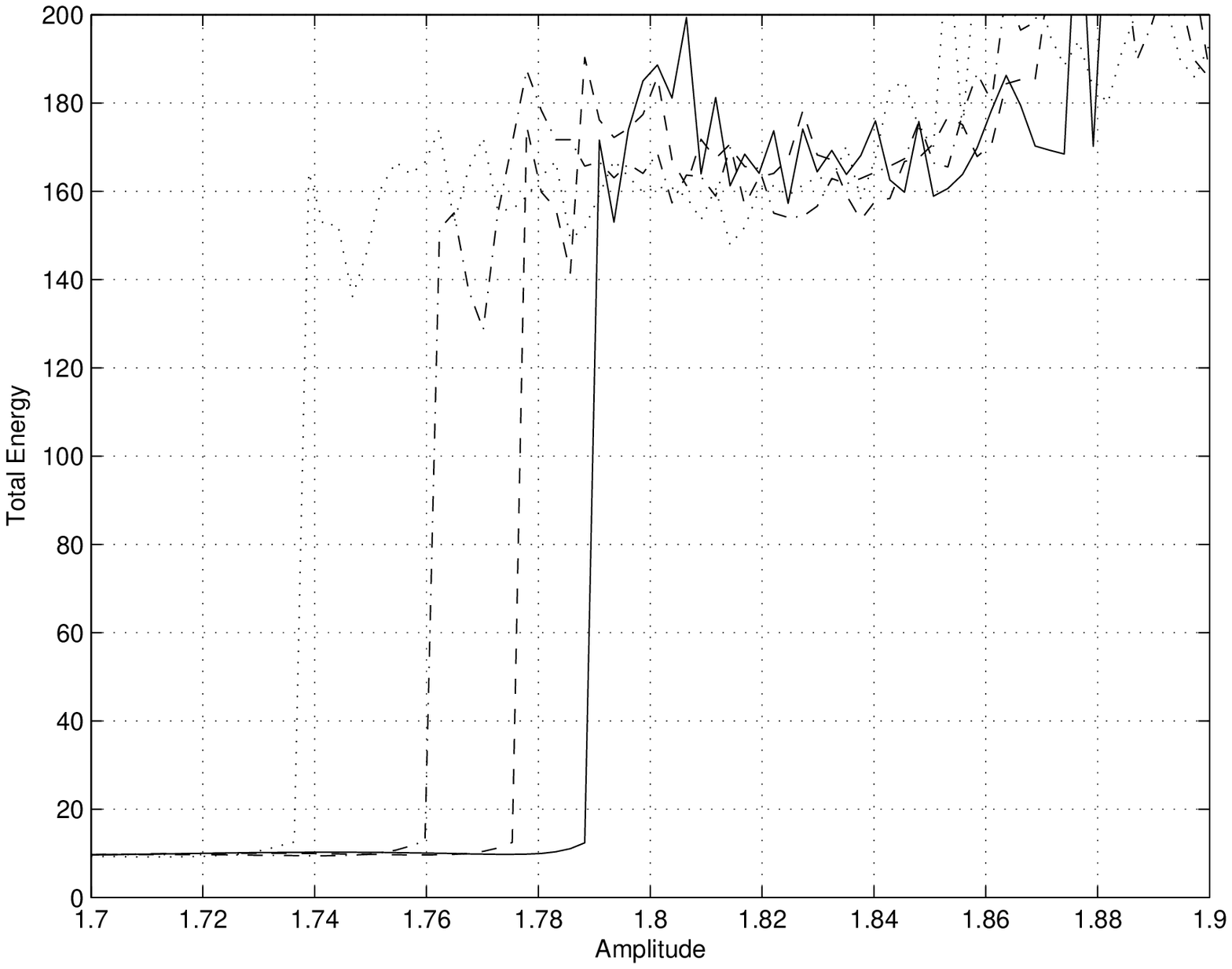} &
\includegraphics[width=0.45\textwidth]{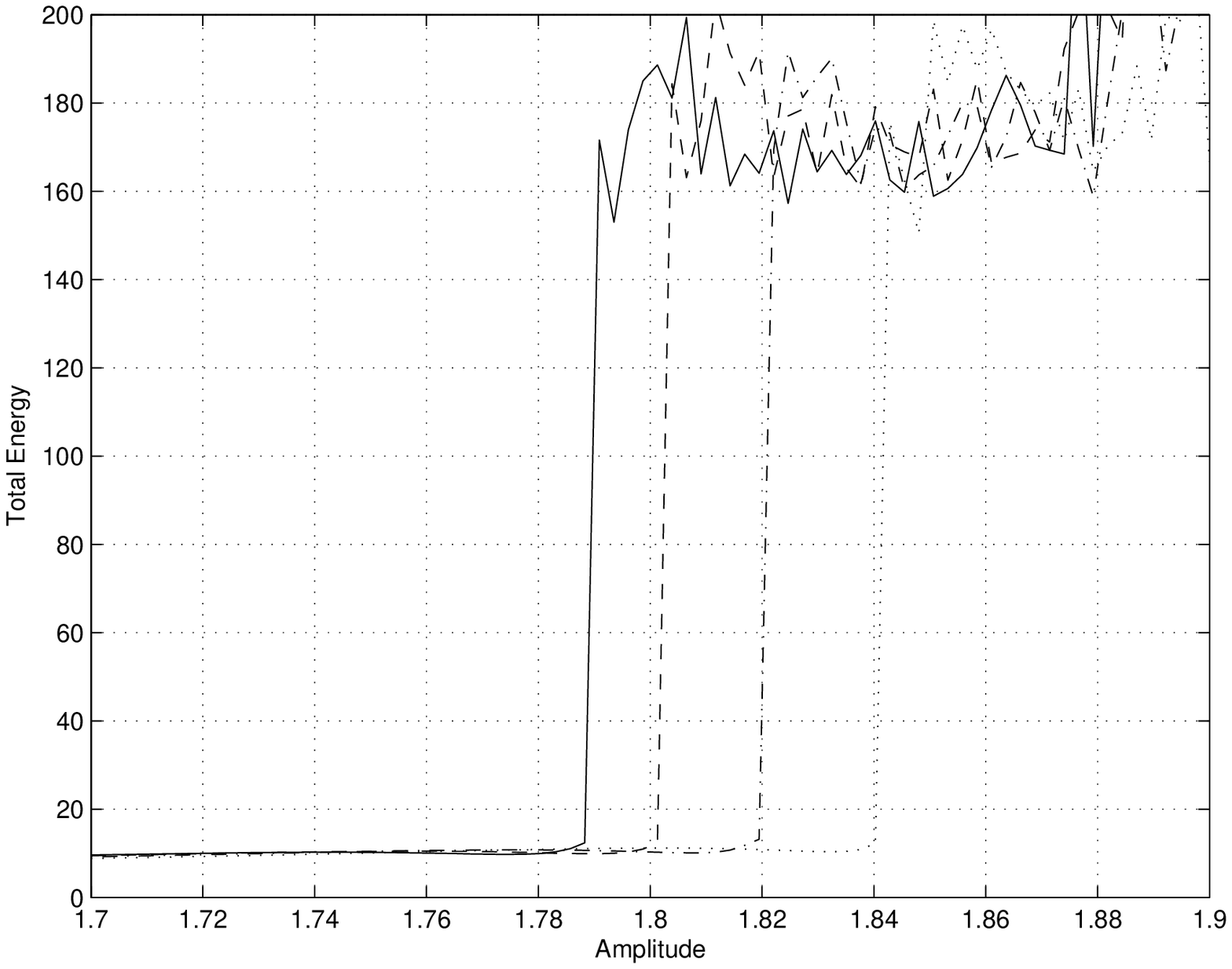} 
\end{tabular}}
\caption{Total energy transmitted into the sine-Gordon system vs. driving amplitude for a driving frequency of $0.9$, with $\beta = 0$, $\gamma = 0.01$, and pure-imaginary and real masses of magnitude $0$ (solid), $0.05$ (dashed), $0.075$ (dash-dotted) and $0.1$ (dotted). \label{Paper2Fig3}} %
\end{figure*}

The matrix $A ( \xi )$ multiplying $\hat {U} _n ^k$ in this equation is the amplification matrix of the problem. It is easy to check that the eigenvalues of $A$ for $\xi = \pi$ are given by 
$$
\lambda _\pm = \frac {1 - 2 c ^2 (\Delta t) ^2 \pm \sqrt{ (1 - 2 c ^2 (\Delta t) ^2) ^2 -
\hat {h} (\pi) \hat {g} (\pi)}} {\hat {g} (\pi)}. \nonumber
$$

Suppose for a moment that $1 - 2 c ^2 (\Delta t) ^2 < - \hat {g} (\pi)$. If the radical in the expression for the eigenvalues of $A (\pi)$ is a pure real number then $| \lambda _- | > 1$. So for every $n \in \mathbb {N}$, $|| A ^n || \geq | \lambda _- | ^n$ grows faster than $K _1 + n K _2$ for any constants $K _1$ and $K _2$. A similar situation happens when the radical is a pure imaginary number, except that in this case $| \cdot |$ represents the usual Euclidean norm in the field of complex numbers. Therefore in order for our numeric method to be stable order $n$ it is necessary that $1 - 2 c ^2 (\Delta t) ^2 > - \hat {g} (\pi)$, which is what we wished to establish.
\end{proof}

\begin{figure*}[tbct]
\centerline{
\begin{tabular}{cc}
\scriptsize{Sine-Gordon chain} &
\scriptsize{Klein-Gordon chain} \\
\includegraphics[width=0.45\textwidth]{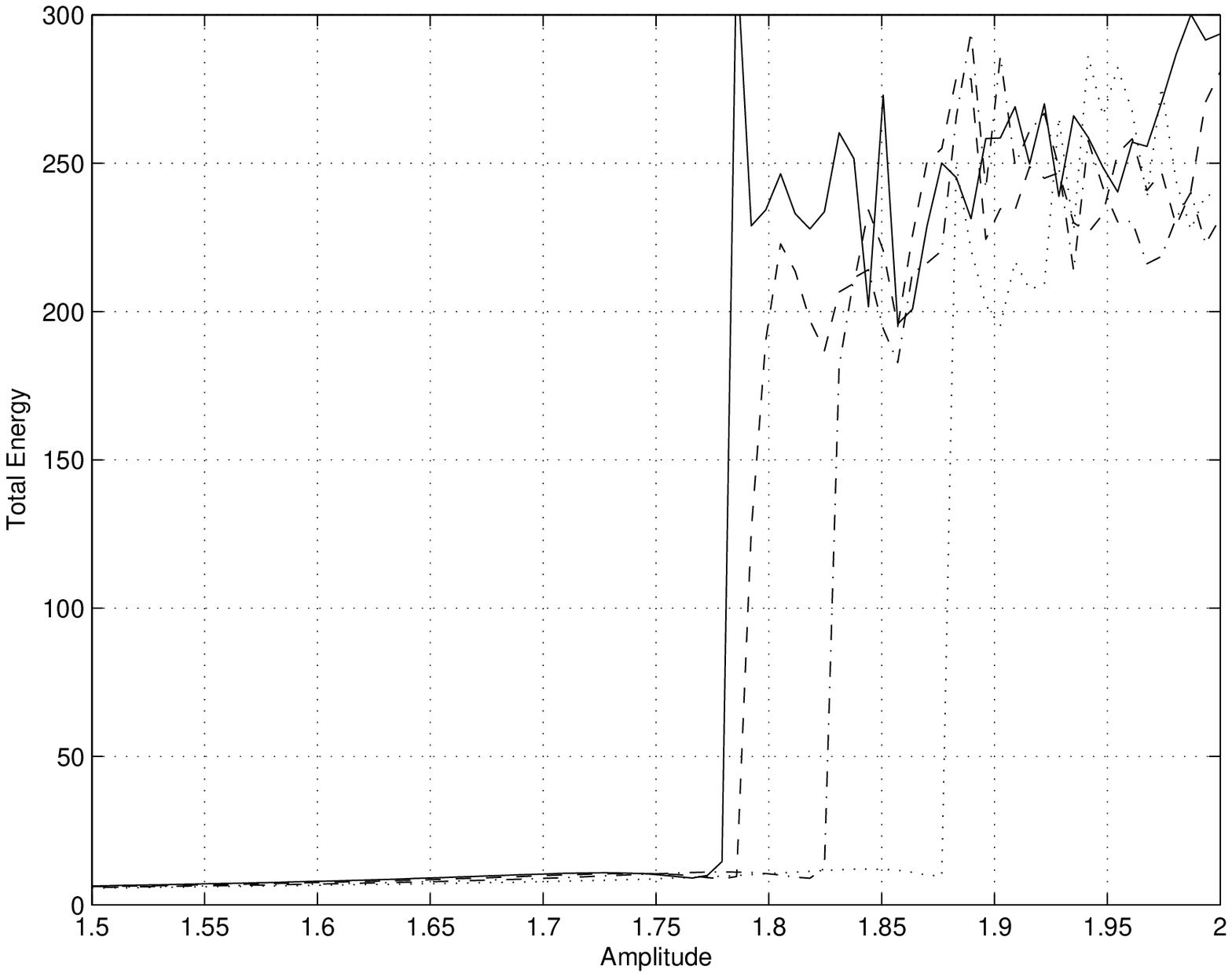} &
\includegraphics[width=0.45\textwidth]{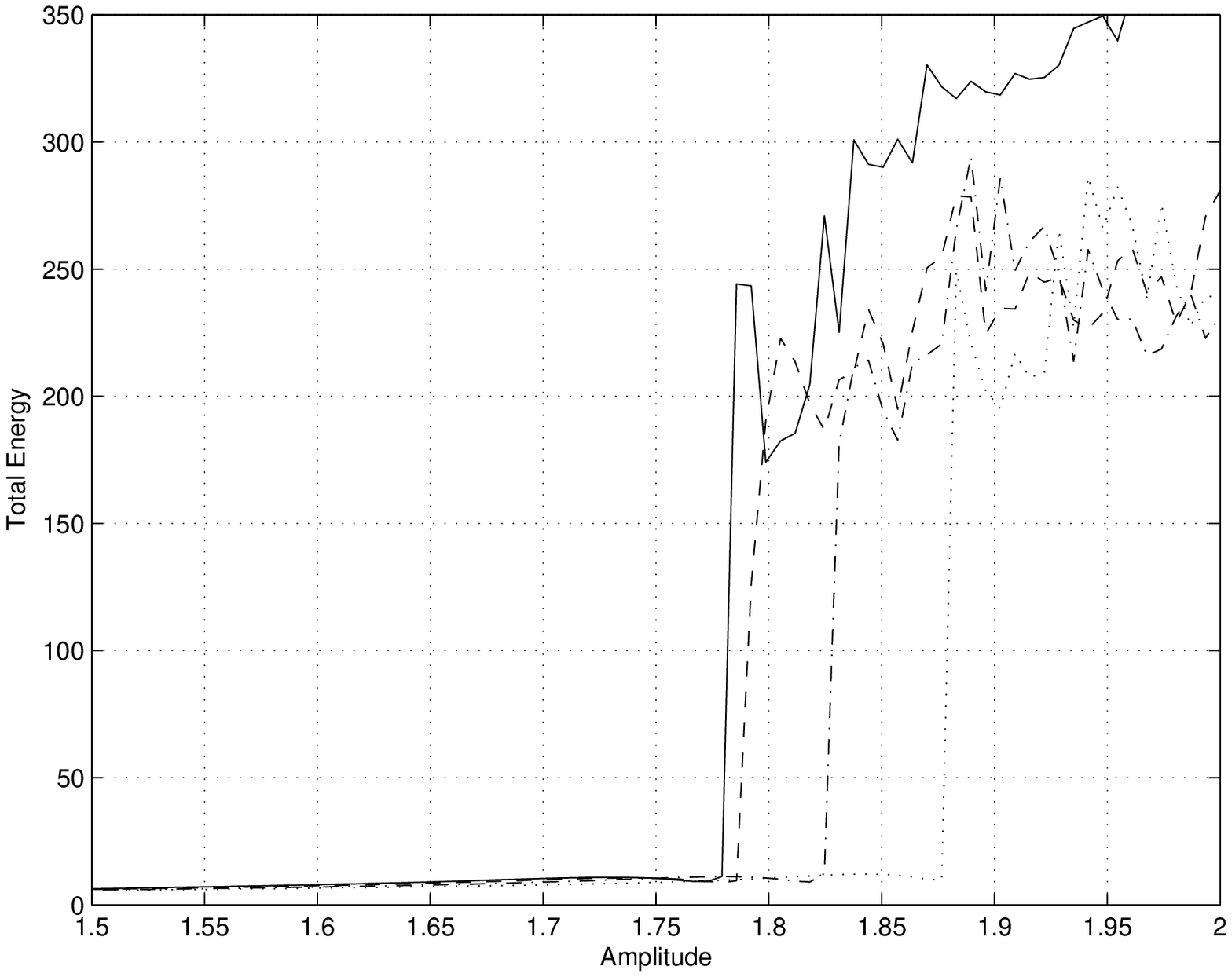}
\end{tabular}}
\caption{Graphs of total energy transmitted into the massless sine-Gordon and Klein-Gordon systems vs. driving amplitude for a driving frequency of $0.9$, with $\gamma = 0$ and $\beta = 0$ (solid), $0.1$ (dashed), $0.2$ (dash-dotted), $0.3$ (dotted). \label{Paper2Fig3half}} %
\end{figure*}

\section{Numerical results}

In this section we study the effects of the internal and external damping coefficients on the behavior of solutions to mixed-value problem (\ref{Eqn:DiscreteMain}). Particularly, we wish to establish the effect of weak damping on the minimum amplitude value necessary for the phenomenon of supratransmission to take place at a fixed driving frequency. Throughout we validate our code against \cite{frenchpaper} and against an implementation of our finite-difference scheme using the Runge-Kutta method of order four.

\subsection{External Damping}

For the remainder of this work we consider a semi-infinite coupled chain of oscillators initially at rest in their equilibrium positions, subject to harmonic forcing at the end described by $\psi (t) = A \sin (\Omega t)$ at any time $t$. The functions $\phi$ and $\varphi$ are both identically equal to zero and, in order to avoid the shock wave produced by the vanishing initial velocity in the boundary, we let the driving amplitude linearly increase from $0$ to $A$ in a finite period of time before we start to compute the total energy. In the present section we will let $\beta$ be equal to zero and consider a discrete system of $200$ coupled oscillators described by (\ref{Eqn:DiscreteMain}) over a typical time period of $200$, with a time step of $0.05$ and a coupling coefficient equal to $4$.

Let us first examine the massless case when no damping is present at all and $\Omega = 0.9$. In order to verify that our mixed-value problem produces a bifurcation it is necessary to study the qualitative behavior of solutions near the predicted threshold value $A _s$ which, in this case, reads approximately $1.80$. The first row of Figure \ref{Paper2Fig1} shows the function $u _{60} (t)$ in the solution of a sine-Gordon system for two different values of the amplitude of the driving end, while the second row presents the corresponding graphs in the solution of a Klein-Gordon chain. The graphs evidence the presence of a bifurcation in the behavior of solutions around the amplitude value $1.79$ for both chains. 

Naturally, the next step in our investigation will be to determine the behavior of the total energy flow injected by the periodic forcing at the end of the undamped discrete chain of oscillators as a function of the amplitude. The solid lines of Figure \ref{Paper2Fig2} represent the graphs of total energy transmitted into the system vs. amplitude for a forcing frequency of $0.9$ and external damping coefficient equal to zero, for a sine-Gordon system in the first column and a Klein-Gordon system in the second. It is worthwhile noticing the abrupt increase in the total energy administered to the system for some amplitude value between $1.77$ and $1.79$. This feature of the graphs evidences the existence of a bifurcation value around the predicted amplitude $A _s$, after which the phenomenon of nonlinear supratransmission takes place.

Figure \ref{Paper2Fig2} also presents graphs of total energy vs. forcing amplitude for weak constant damping coefficients $\gamma = 0.01$, $0.02$ and $0.03$ in a sine-Gordon system of oscillators. The graphs show a tendency of the bifurcation value to increase linearly as the external damping coefficient is increased. Another interesting feature of this figure is the decrease of total energy for increasing values of $\gamma$, at least for fixed amplitudes greater than the undamped bifurcation threshold. Needless to point out that similar conclusions are obtained for Klein-Gordon chains of oscillators. 

The qualitative effect of $m$ in a sine-Gordon system is also of interest in the analysis of solutions of this chain and is numerically carried out in Figure \ref{Paper2Fig3} for a chain with external damping equal to $0.01$ and pure-imaginary and real masses, using graphs of total energy administered into the system through the boundary vs. driving amplitude. A horizontal shift in the occurrence of the bifurcation value is readily noticed in these graphs. Indeed, the displacement of the bifurcation amplitude for the system (\ref {Eqn:DiscreteMain}) of mass $m$ with respect to the bifurcation amplitude of the massless system is a monotone increasing function of $m ^2$. Analogous computational results (not included here) were obtained for a similar Klein-Gordon chain.

\subsection{Internal damping}

Consider again a system of $200$ oscillators ruled by mixed-value problem (\ref {Eqn:DiscreteMain}) over a time period of $200$, with time step $0.05$, coupling coefficient equal to $4$ and constant external damping equal to zero. In this context, Figure \ref{Paper2Fig3half} shows graphs of total energy vs. forcing amplitude for internal damping coefficients $\beta = 0$, $0.1$, $0.2$ and $0.3$, for sine-Gordon and Klein-Gordon systems. As in the case of external damping, we observe that the threshold value at which supratransmission starts tends to increase as the value of $\beta$ is increased. Opposite to the case of external damping, though, the minimum value for which supratransmission starts varies with $\beta$ in a nonlinear way. 

Next we verify our conclusions on the effect of the mass $m$ on the qualitative behavior of solutions of the sine-Gordon system. Figure \ref{Paper2Fig4} shows graphs of total energy vs. driving amplitude for a system with no external damping, $\beta = 0.2$ and driving frequency of $0.9$, for different pure-imaginary and real masses. As observed before, the graphs evidence a shift on the bifurcation amplitude with respect to the massless bifurcation value which is an increasing function of $m ^2$.

\subsection{Bifurcation analysis}

\begin{figure*}[tcb]
\centerline{
\begin{tabular}{cc}
\scriptsize{Pure-imaginary masses} &
\scriptsize{Real masses} \\
\includegraphics[width=0.45\textwidth]{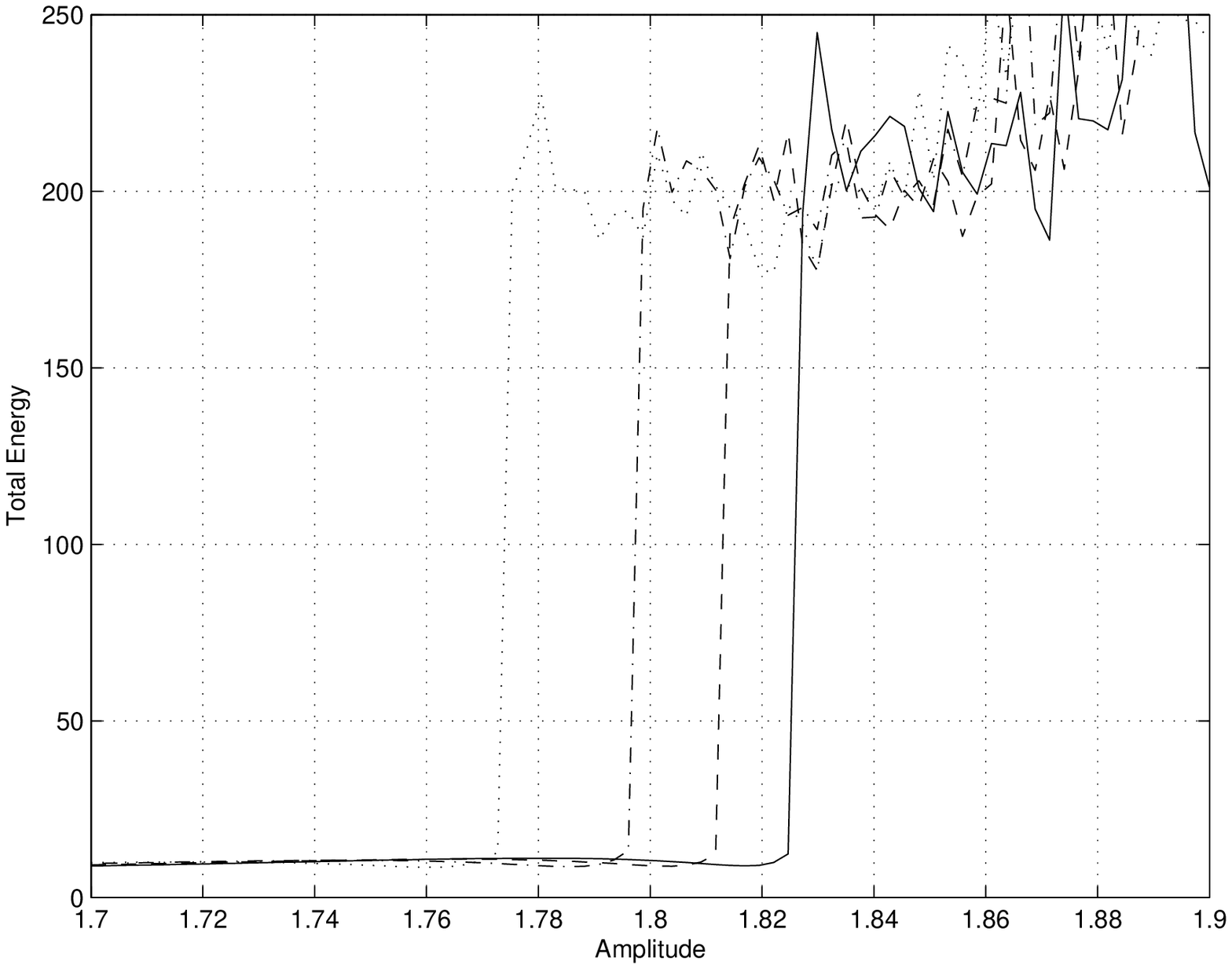} &
\includegraphics[width=0.45\textwidth]{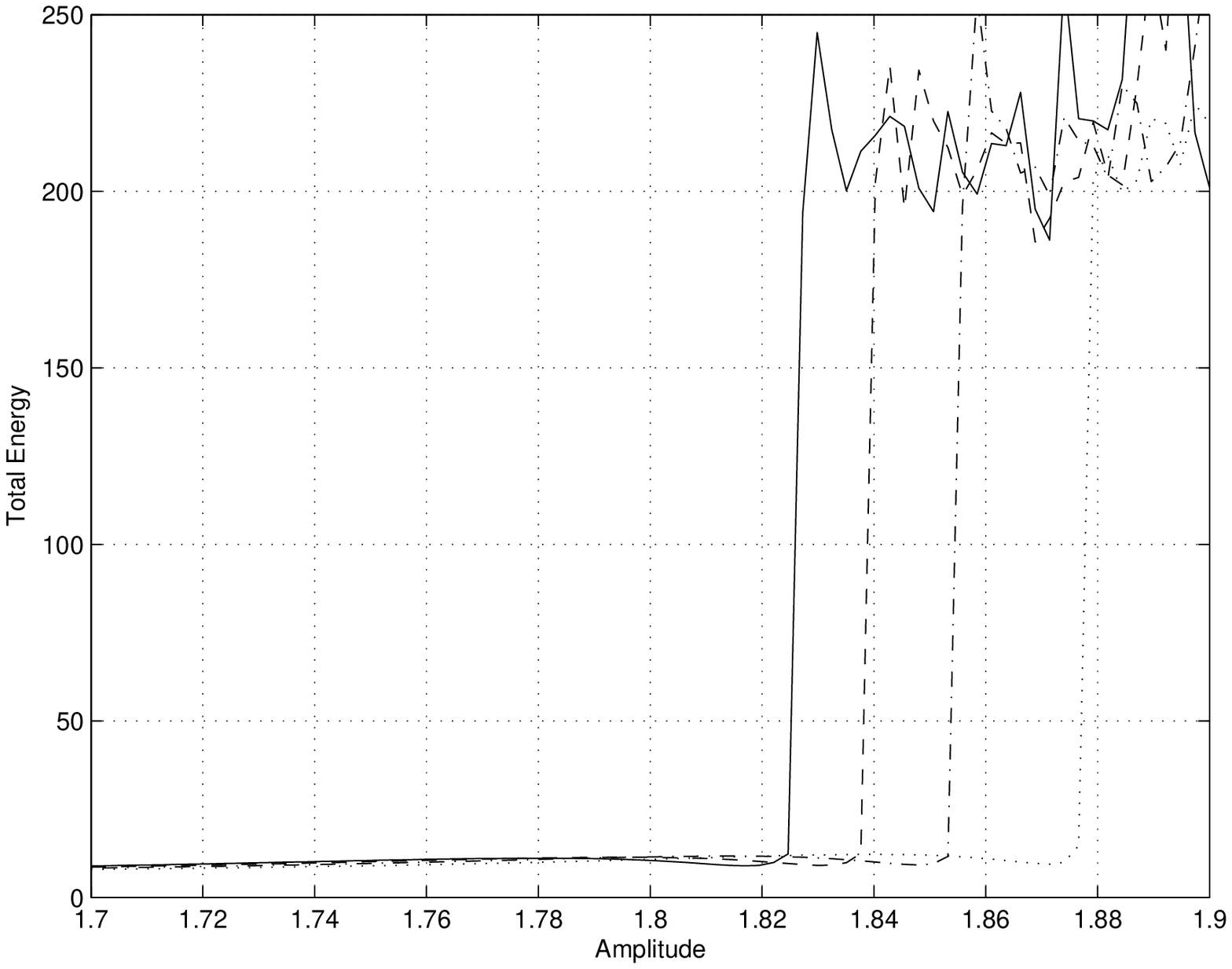}
\end{tabular}}
\caption{Total energy transmitted into the sine-Gordon system vs. driving amplitude for a driving frequency of $0.9$, with $\beta = 0.2$, $\gamma = 0$, and pure-imaginary and real masses of magnitude $0$ (solid), $0.05$ (dashed), $0.075$ (dash-dotted) and $0.1$ (dotted). \label{Paper2Fig4}} %
\end{figure*}

Both the chain of coupled sine-Gordon equations and the chain of Klein-Gordon equations have numerically proved to undergo nonlinear supratransmission for a driving frequency equal to $0.9$ and different values of the external and internal damping coefficients, thus establishing that the results obtained in this work do not depend on integrability. Naturally we are interested in determining that the process of nonlinear supratransmission happens for any frequency value in the forbidden band gap. With that purpose in mind, we obtained graphs of total energy vs. driving frequency and various amplitude values for undamped discrete systems of $200$ coupled oscillators with coupling coefficient $4$, time period of $200$ and time step $0.05$. The $3$-dimensional results for both chains of oscillators are shown in Figure \ref{Paper2Fig11} together with a graph of the continuous-limit threshold amplitude $As$ vs. driving frequency on the amplitude-frequency plane for comparison purposes.

\begin{figure*}[tcb]
\centerline{
\begin{tabular}{cc}
\scriptsize{Sine-Gordon chain} &
\scriptsize{Klein-Gordon chain} \\
\includegraphics[width=0.45\textwidth]{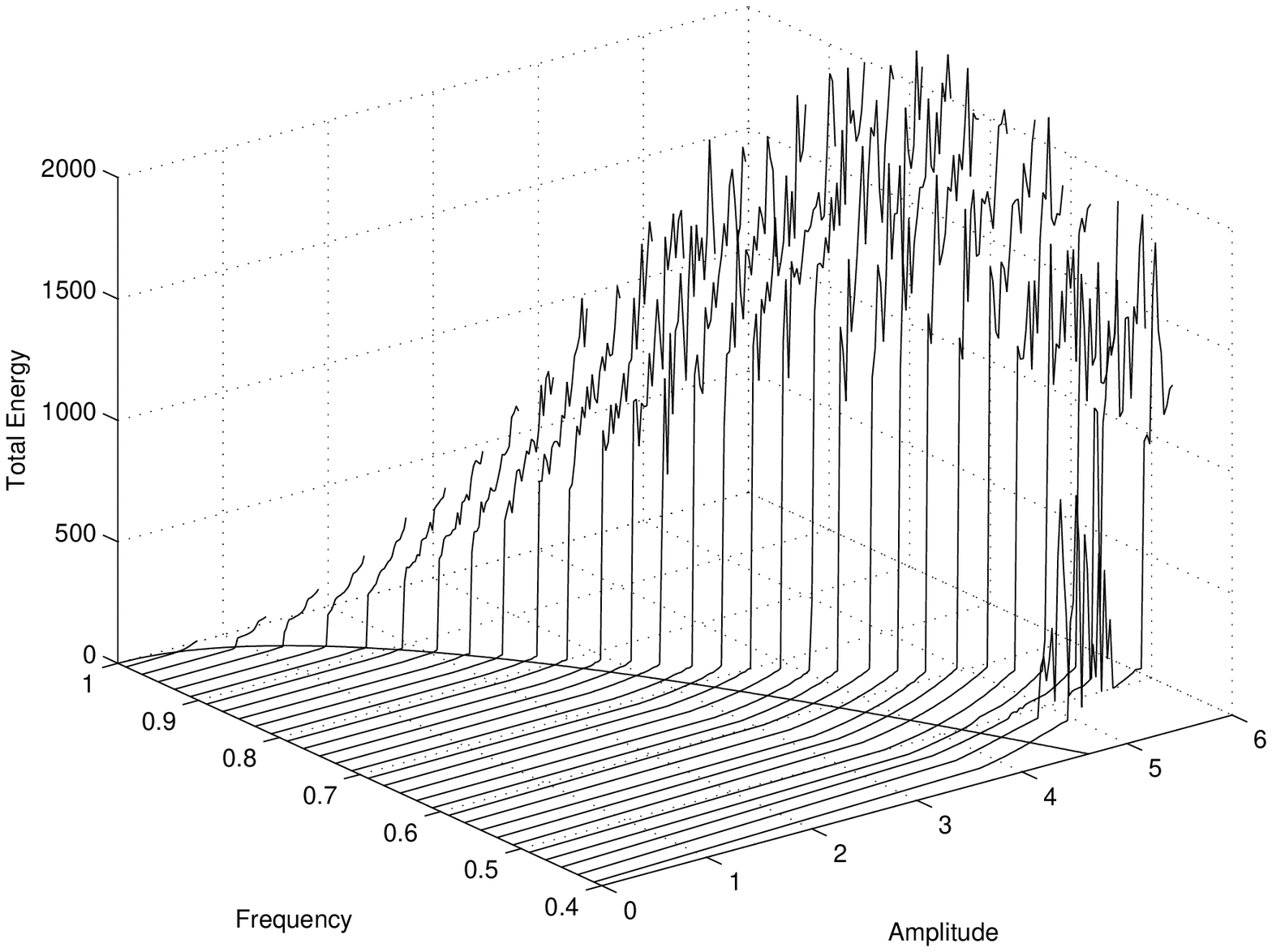} &
\includegraphics[width=0.45\textwidth]{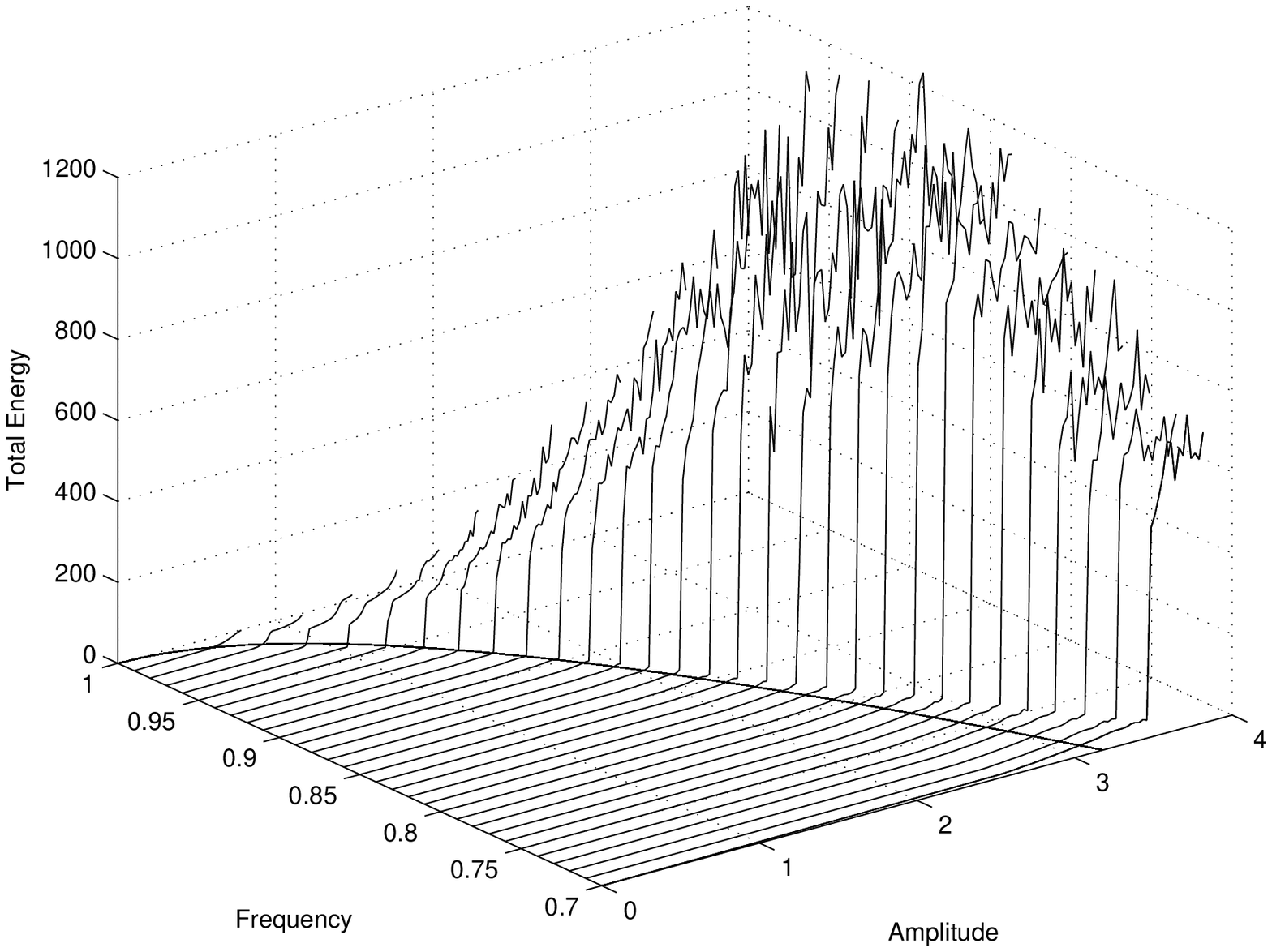}
\end{tabular}}
\caption{Graphs of total energy vs. driving frequency and amplitude for massless and undamped sine-Gordon and Klein-Gordon chains of coupled oscillators.\label{Paper2Fig11}} %
\end{figure*}

Several observations may be immediately drawn from Figure \ref{Paper2Fig11}. First of all, the predicted bifurcation values $As$ display an excellent concordance to the corresponding values obtained using the finite difference-schemes associated with the sine-Gordon and the Klein-Gordon chains. Second, the process of supratransmission ceases to appear in the Klein-Gordon case for driving frequencies below $0.7$. Third, for driving frequencies close to the band gap limit, the bifurcation threshold is not clearly determined from the energy vs. driving amplitude graph of the sine-Gordon chain, as prescribed by \cite {frenchpaper}. For those frequencies, it is indispensable to increase the time period of approximation at least up to $500$. Fourth, strong numerical proof of the existence of the occurrence of the supratransmission process is at hand and we have now reasons to believe that there exists a bifurcation function $A ( \Omega ; \beta , \gamma , m ^2)$ for driving amplitude associated with (\ref {Eqn:DiscreteMain}).

We proceed then to obtain graphs of amplitude values for which nonlinear supratrasmission starts vs. driving frequency for a massless sine-Gordon chain of coupled oscillators with  internal damping coefficient equal to zero and different values of $\gamma$. The numerical results are summarized in Figure \ref{Paper2Bif1} together with the plot of the prescribed continuous-limit bifurcation amplitudes $As$. It is worth noticing that the bifurcation threshold increases with $\gamma$ for fixed frequencies above $0.35$, as previously evidenced for a frequency equal to $0.9$. We must notice also that the discrepancies that appear for frequencies below $0.35$ when $\gamma = 0$, also appear for greater values of $\gamma$, each time bounded in smaller intervals. In fact, we have checked that the discrepancy region --- an effect of the phenomenon of harmonic phonon quenching --- tends to vanish for higher values of external damping (results not included). In this state of matters, we wish to point out that better numerical approximations to the bifurcation threshold are obtained for larger systems of oscillators at the expense of superior needs in terms of computational time. Likewise, we have confirmed that larger values of the coupling coefficient lead to better approximations to the bifurcation threshold, as prescribed by \cite {frenchpaper}.

\begin{figure}[t]
\centerline{
\includegraphics[width=0.6\textwidth]{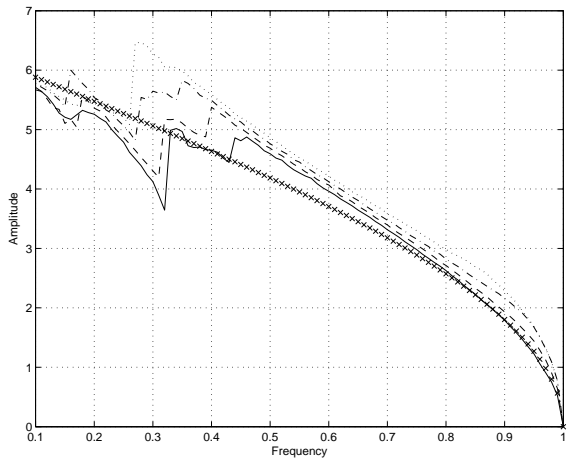}}
\caption{Diagram of smallest driving amplitude value at which supratransmission begins vs. driving frequency for a massless system with $\beta = 0$ and values of $\gamma$ equal to $0$ (solid), $0.1$ (dashed), $0.2$ (dash-dotted) and $0.3$ (dotted). The theoretical threshold $A _s$ in the undamped is shown as a sequence of $\times$-marks. \label{Paper2Bif1}} %
\end{figure}

Figure \ref{Paper2Bif2} shows bifurcation diagrams for a massless sine-Gordon system of oscillators with no external damping and different constant internal damping coefficients. As expected, the bifurcation threshold tends to increase with $\beta$ for a fixed driving frequency. The effects of harmonic phonon quenching are present again in all the bifurcation diagrams, but contrary to the case of external damping, in the case of internal damping the range over which discrepancies occur slightly widens as $\beta$ increases. Also, it is worth observing that the length of the forbidden band gap increases with the parameter $\beta$ apparently in a linear fashion. Moreover, the graphs of bifurcation diagrams for nonzero values of $\beta$ are approximately obtained by shifting horizontally the corresponding graph of the undamped diagram a number of $\beta$ units to the right. More concretely, $A (\Omega; 0, 0 , 0)$ is a good approximation for $A (\Omega - \beta; \beta, 0, 0)$.

\begin{figure}[t]
\centerline{
\includegraphics[width=0.6\textwidth]{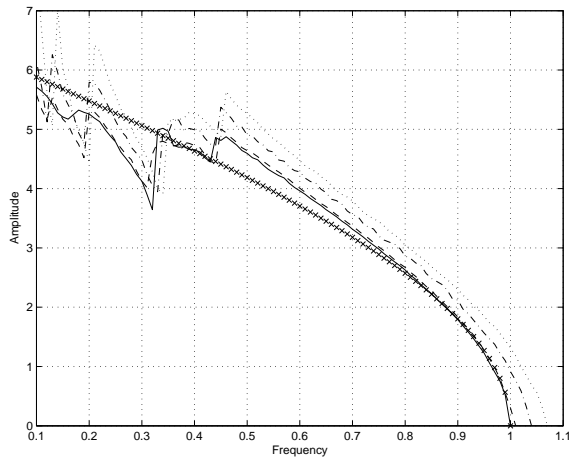}}
\caption{Diagram of smallest driving amplitude value at which supratransmission begins vs. driving frequency for a massless system with $\gamma = 0$ and values of $\beta$ equal to $0$ (solid), $0.1$ (dashed), $0.4$ (dash-dotted) and $0.7$ (dotted). The theoretical threshold $A _s$ in the undamped case is shown as a sequence of $\times$-marks. \label{Paper2Bif2}} %
\end{figure}

Finally, we compute bifurcation diagrams for an undamped sine-Gordon system with different real and pure-imaginary masses in order to establish the effect of $m$ on the occurrence of the bifurcation threshold. The numerical results are summarized in Figure \ref {Paper2Bif3} for some real and pure-imaginary masses, and driving frequencies starting at $0.5$. Our results lead us to conclude that the bifurcation diagram of the sine-Gordon system of oscillators with mass $m$ is approximately equal to the graph of the corresponding massless graph shifted $\sqrt {1 + m ^2} - 1$ horizontal units, for $| m | \ll 1$. In order to test our claim numerically, we obtained graphs of absolute differences between the massless undamped bifurcation diagram, and shifted undamped bifurcation diagrams for several mass values (results not included). We observed that the differences tend to attenuate for high frequency values and that smaller differences are obtained for smaller values of $m$ in magnitude.

\section{Conclusions}

In this paper we have developed a numerical method to approximate solutions of a semi-infinite nonlinear chain of coupled oscillators ruled by modified sine-Gordon equations harmonically driven at its end. The proposed finite-difference scheme is consistent order $\mathcal {O} (\Delta t) ^2$ and we provided a necessary condition in order for the method to be stable order $n$. The process of nonlinear supratransmission for a coupled system of oscillators described by sine-Gordon equations was studied numerically under the scope of this numerical technique, and the dependence of supratransmission on damping was analyzed.

\begin{figure}[tbc]
\centerline{
\includegraphics[width=0.6\textwidth]{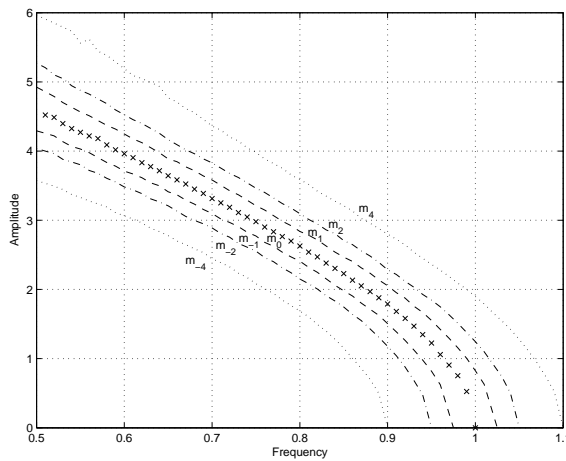}}
\caption{Diagram of smallest driving amplitude value at which supratransmission begins vs. driving frequency for an undamped sine-Gordon system with $\sqrt {m _\ell ^2 + 1} = 1 + \frac {\ell} {40}$ for $\ell = -4$, $-2$, $-1$, $0$, $1$, $2$, $4$. \label{Paper2Bif3}} %
\end{figure}

Several conclusions can be drawn from our computational experiments on the sine-Gordon system of coupled oscillators (\ref {Eqn:DiscreteMain}). First of all, we have shown that the phenomenon of harmonic phonon quenching still appears in the presence of external and internal damping and that the discrepancy region due to phonon quenching is shortened as the external damping coefficient is increased, while it slightly widens as the internal damping coefficient increases. Second, the threshold value at which supratransmission first occurs for fixed frequencies outside the discrepancy region is seen to increase for both external and internal damping as the damping coefficient increases; both conclusions are clearly consequences of the dispersive and dissipative natures of the parameters $\beta$ and $\gamma$, respectively. 

Finally, the bifurcation diagram for a value of the parameter $\beta$ is approximately equal to the corresponding diagram for the undamped system shifted $\beta$ horizontal units to the right. Likewise, a horizontal shift of $\sqrt {1 + m ^2} - 1$ units in the bifurcation diagram of a sine-Gordon system of mass $m$ with respect to the corresponding massless system is observed for small masses and frequencies outside the discrepancy region.

\subsubsection*{Acknowledgment}

It is our duty to express our deepest gratitude to Dr. \'{A}lvarez Rodr\'{\i}guez, dean of the Centro de Ciencias B\'{a}sicas of the Universidad Aut\'{o}noma de Aguascalientes, and to Dr. Avelar Gonz\'{a}lez, head of the Direcci\'{o}n General de Investigaci\'{o}n y Posgrado of the same university, for uninterestedly providing us with computational resources to produce this article. The present work represents a set of partial results under project PIM07-2 at this university.


\begin{thebibliography}{10}

\bibitem{Remoissenet}
{Remoissenet, M.}
\newblock {\em {Waves Called Solitons}}.
\newblock Springer-Verlag, New York, 3rd edition, 1999.

\bibitem{Lomdahl}
{Lomdahl, P. S.; Soerensen, O. H.; Christiansen, P. L.}
\newblock Soliton excitations in {Josephson} tunnel junctions.
\newblock {\em Phys. Rev. B}, {\bf 25}(9):5737--5748, 1982.

\bibitem{Makhankov}
{Makhankov, V. G.; Bishop, A. R.; Holm, D. D.}, editor.
\newblock {\em {Nonlinear Evolution Equations and Dynamical Systems Needs '94;
  Los Alamos, NM, USA, 11-18 September '94: 10th International Workshop}}.
\newblock World Scientific Pub. Co. Inc., Singapore, first edition, 1995.

\bibitem{Barone}
{Barone, A.; Esposito, F.; Magee, C. J.; Scott, A. C.}
\newblock Theory and applications of the {sine-Gordon} equation.
\newblock {\em Riv. Nuovo Cim.}, {\bf 1}:227--267, 1971.

\bibitem{Hasegagwa}
{Hasegagwa, A.; Tappert, F.}
\newblock Transmission of stationary nonlinear optical pulses in dispersive
  dielectric fibers. {I. Anomalous} dispersion.
\newblock {\em Appl. Phys. Lett.}, {\bf 23}:142--144, 1973.

\bibitem{Mollen}
{Mollenauer, L. F.; Stolen, R. H.; Gordon, J. P.}
\newblock Experimental observation of picosecond pulse narrowing and solitons
  in optical fibers.
\newblock {\em Phys. Rev. Lett.}, {\bf 45}:1095--1098, 1980.

\bibitem{McCall}
{McCall, S. L.; Hahn, E. L.}
\newblock Self-induced transparency.
\newblock {\em Phys. Rev.}, {\bf 183}:457--485, 1969.

\bibitem{Ablowitz}
{Ablowitz, M. J.; Kaup, D. J.; Newell, A. C.}
\newblock Coherent pulse propagation, a dispersive, irreversible phenomenon.
\newblock {\em J. Math. Phys.}, {\bf 15}:1852--1858, 1974.

\bibitem{Ustinov}
{Ustinov, A. V.}
\newblock Solitons in {Josephson} junctions.
\newblock {\em Physica D}, {\bf 123}:315--329, 1998.

\bibitem{Ustinov2}
{Ustinov, A. V.; Mygind, J.; Oboznov, V. A.}
\newblock Phase-locked flux-flow {Josephson} oscillator.
\newblock {\em J. Appl. Phys.}, {\bf 72}:1203--1205, 1992.

\bibitem{Ustinov3}
{Ustinov, A. V.; Mygind, J.; Pedersen, N. F.; Oboznov, V. A.}
\newblock Millimeter-wave-induced fluxon pair creation in flux-flow {Josephson}
  oscillators.
\newblock {\em Phys. Rev. B}, {\bf 46}:578--580, 1992.

\bibitem{frenchpaper}
{Geniet, F.; Leon, J.}
\newblock Energy transmission in the forbidden band gap of a nonlinear chain.
\newblock {\em Phys. Rev. Lett.}, {\bf 89}:134102, 2002.

\bibitem{Geniet2}
{Geniet, F.; Leon, J.}
\newblock Nonlinear supratransmission.
\newblock {\em J. Phys.: Condens. Matter}, {\bf 15}:2933--2949, 2003.

\bibitem{Thomas}
{Thomas, J. W.}
\newblock {\em {Numerical Partial Differential Equations}}.
\newblock Springer-Verlag, New York, 1995.

\end{thebibliography}
\end{document}